\begin{document}
\numberwithin{equation}{section}

\def\1#1{\overline{#1}}
\def\2#1{\widetilde{#1}}
\def\3#1{\widehat{#1}}
\def\4#1{\mathbb{#1}}
\def\5#1{\frak{#1}}
\def\6#1{{\mathcal{#1}}}

\newcommand{\Lie}[1]{\ensuremath{\mathfrak{#1}}}
\newcommand{\LieL}{\Lie{l}}
\newcommand{\LieH}{\Lie{h}}
\newcommand{\LieG}{\Lie{g}}
\newcommand{\de}{\partial}
\newcommand{\R}{\mathbb R}
\newcommand{\FH}{{\sf Fix}(H_p)}
\newcommand{\al}{\alpha}
\newcommand{\tr}{\widetilde{\rho}}
\newcommand{\tz}{\widetilde{\zeta}}
\newcommand{\tk}{\widetilde{C}}
\newcommand{\tv}{\widetilde{\varphi}}
\newcommand{\hv}{\hat{\varphi}}
\newcommand{\tu}{\tilde{u}}
\newcommand{\tF}{\tilde{F}}
\newcommand{\debar}{\overline{\de}}
\newcommand{\Z}{\mathbb Z}
\newcommand{\C}{\mathbb C}
\newcommand{\Po}{\mathbb P}
\newcommand{\zbar}{\overline{z}}
\newcommand{\G}{\mathcal{G}}
\newcommand{\So}{\mathcal{S}}
\newcommand{\Ko}{\mathcal{K}}
\newcommand{\U}{\mathcal{U}}
\newcommand{\B}{\mathbb B}
\newcommand{\oB}{\overline{\mathbb B}}
\newcommand{\Cur}{\mathcal D}
\newcommand{\Dis}{\mathcal Dis}
\newcommand{\Levi}{\mathcal L}
\newcommand{\SP}{\mathcal SP}
\newcommand{\A}{\mathcal O^{k+\alpha}(\overline{\mathbb D},\C^n)}
\newcommand{\CA}{\mathcal C^{k+\alpha}(\de{\mathbb D},\C^n)}
\newcommand{\Ma}{\mathcal M}
\newcommand{\Ac}{\mathcal O^{k+\alpha}(\overline{\mathbb D},\C^{n}\times\C^{n-1})}
\newcommand{\Acc}{\mathcal O^{k-1+\alpha}(\overline{\mathbb D},\C)}
\newcommand{\Acr}{\mathcal O^{k+\alpha}(\overline{\mathbb D},\R^{n})}
\newcommand{\Co}{\mathcal C}
\newcommand{\Hol}{{\sf Hol}(\mathbb H, \mathbb C)}
\newcommand{\Aut}{{\sf Aut}(\mathbb D)}
\newcommand{\D}{\mathbb D}
\newcommand{\id}{\sf id}
\newcommand{\oD}{\overline{\mathbb D}}
\newcommand{\oX}{\overline{X}}
\newcommand{\loc}{L^1_{\rm{loc}}}
\newcommand{\la}{\langle}
\newcommand{\ra}{\rangle}
\newcommand{\thh}{\tilde{h}}
\newcommand{\N}{\mathbb N}
\newcommand{\kd}{\kappa_D}
\newcommand{\Hr}{\mathbb H}
\newcommand{\ps}{{\sf Psh}}
\newcommand{\Hess}{{\sf Hess}}
\newcommand{\subh}{{\sf subh}}
\newcommand{\harm}{{\sf harm}}
\newcommand{\ph}{{\sf Ph}}
\newcommand{\tl}{\tilde{\lambda}}
\newcommand{\gdot}{\stackrel{\cdot}{g}}
\newcommand{\gddot}{\stackrel{\cdot\cdot}{g}}
\newcommand{\fdot}{\stackrel{\cdot}{f}}
\newcommand{\fddot}{\stackrel{\cdot\cdot}{f}}
\def\v{\varphi}
\def\Re{{\sf Re}\,}
\def\Im{{\sf Im}\,}

\newtheorem{theorem}{Theorem}[section]
\newtheorem{lemma}[theorem]{Lemma}
\newtheorem{proposition}[theorem]{Proposition}
\newtheorem{corollary}[theorem]{Corollary}

\theoremstyle{definition}
\newtheorem{definition}[theorem]{Definition}
\newtheorem{example}[theorem]{Example}

\theoremstyle{remark}
\newtheorem{remark}[theorem]{Remark}
\numberwithin{equation}{section}

%tentative title
\title[Boundary behavior]{Boundary behavior of infinitesimal generators in the unit ball}
\author[F. Bracci]{Filippo Bracci$^\dag$}
\address{F. Bracci: Dipartimento Di Matematica\\
Universit\`{a} di Roma \textquotedblleft Tor Vergata\textquotedblright\ \\
Via Della Ricerca Scientifica 1, 00133 \\
Roma, Italy} \email{fbracci@mat.uniroma2.it}
\author[D. Shoikhet]{David Shoikhet}
\address{D. Shoikhet: Department of Mathematics\\
ORT Braude College \\ 21982 Karmiel,
Israel}
\email{davs@braude.ac.il}
\date\today
\thanks{$\dag$ Partially supported by the ERC grant ``HEVO - Holomorphic Evolution Equations'' n. 277691}

\begin{abstract}
We prove a Julia-Wolff-Carath\'eodory type theorem for
infinitesimal generators on the unit ball in $\mathbb{C}^{n}$.
Moreover, we study jets expansions at the boundary and give necessary and sufficient conditions on such jets
for an infinitesimal generator to generate a group
of automorphisms of the ball.
\end{abstract}

\subjclass[2000]{Primary 37L05; Secondary 32A40, 20M20}

\keywords{Infinitesimal generators; semigroups of holomorphic mappings; Julia-Wolff-Caratheodory theorem; boundary rigidity}

\maketitle

\section{Introduction}

The classical Julia-Wolff-Carath\'eodory theorem (see, {\sl e.g.} \cite{Ab-T, Co-Ma, RS, Shb}) is  the most powerful tool for studying properties of bounded holomorphic functions of the unit disc $\D$ of $\C$ at a given boundary point. This theorem has been generalized to the unit ball $\B^n$ of $\C^n$ by W. Rudin (see \cite{Ru}) and to strongly (pseudo)convex domains and other domains in $\C^n$ by other authors, notably by M. Abate  (see \cite{Abate}, \cite{Ab-T}. See also \cite{Abanew} for the most recent and complete survey on the subject).

In what follows we are mainly interested in the case of mappings fixing a boundary point. Since the group of automorphisms of $\B^n$ acts bi-transitively on $\de \B^n$, without loss of generality we restrict our attention to the point $e_1=(1,0\ldots, 0)\in \de \B^n$.

The maps we are working with are not assumed to be
continuous up to the boundary, thus we have to specify the meaning of
the term ``boundary fixed point''. In higher dimensions, in fact,
different approaches to boundary limits are possible. We recall
them here briefly (see \cite{Abate}, \cite{Ru}  for more
information).

Let $R\geq 1$ and let $K(e_1,R):=\{z\in \B^n: |1-z_1|\leq
\frac{R}{2}(1-\|z\|^2)\}$ be a {\sl Kor\'anyi region of vertex $e_1$ and amplitude $R$} (see
\cite[Section 5.4.1]{Ru}, \cite{Co-Ma}). In \cite[Section 2.2.3]{Abate} a slightly different but essentially equivalent definition is given and used. In order not to excessively burden the notation, since we are only working at $e_1$, from now on, when we talk about Kor\'anyi regions, we will always mean Kor\'anyi regions of vertex $e_1$.

Let $f: \B^n \to \C^n$
be a holomorphic map. We say that $f$ has {\sl $K$-limit} $L$ at
$e_1$ -- and we write $K\hbox{-}\lim_{z\to e_1}f(z)=L$ -- if for
each sequence $\{z_k\}\subset \B^n$ converging to $e_1$ such that
$\{z_k\}$  belongs eventually to some Kor\'anyi region, it follows
that $f(z_k)\to L$. We say that $f$ has {\sl restricted $K$-limit}
$L$ at $e_1$ -- and we write $\angle_K\lim_{z\to e_1}f(z)=L$ -- if
for each sequence $\{z_k\}\subset \B^n$ converging to $e_1$ such
that $\|z_k-\la z_k,e_1\ra e_1\|^2/(1-|\la z_k,e_1\ra|^2)\to 0$
and $\la z_k, e_1\ra\to 1$ non-tangentially in $\D$ it follows
that $f(z_k)\to L$. Finally, we say that $f$ has {\sl
non-tangential limit} $L$ at $e_1$ and we write $\angle\lim_{z\to
e_1}f(z)=L$, if for each sequence $\{z_k\}\subset \B^n$ converging
non-tangentially to $e_1$ -- {\sl i.e.}, such that there exists
$C>0$ with $\|z_k-e_1\|\leq C (1-\|z_k\|^2)$ for all $k\geq 1$ --
it follows that $f(z_k)\to L$.

One can show that
\[
K\hbox{-}\lim_{z\to e_1}f(z)=L\Longrightarrow \angle_K\lim_{z\to
e_1}f(z)=L\Longrightarrow\angle\lim_{z\to e_1}f(z)=L,
\]
but the converse to any of these implications  is not true in general.

A holomorphic self-map $f:\B^n\to \B^n$ has a {\sl boundary regular fixed point} at $e_1$ if $\angle \lim_{z\to e_1}f(z)=e_1$ and \[
\al_f(e_1):=\liminf_{z\to e_1}\frac{1-\|f(z)\|}{1-\|z\|}<+\infty.
\]

Now we can formulate the Julia-Wolff-Carath\'eodory Theorem for
$\B^n$ for  boundary regular fixed points in the way we need in
this paper. As is customary, we denote by $\{e_1,\ldots, e_n\}$
the standard orthonormal basis in $\C^n$ (the symbol $e_1$ denotes
thus both the point and the direction).

\begin{theorem}[Rudin]\label{RudinJWC}
Let $f:\B^n\to \B^n$ be holomorphic. Suppose that $e_1$ is a boundary regular fixed point for $f$. Then
$K\hbox{-}\lim_{z\to e_1} f(z)=e_1$. Moreover,
\begin{itemize}
\item[(1$^{'}$)] $\langle df_z(e_1), e_1\rangle$ and $\langle df_z(e_h), e_k\rangle$ are bounded in any Kor\'anyi region for $h,k=2,\ldots, n$.
\item[(1$^{''}$)] $\langle df_z(e_j), e_1\rangle/(1-z_1)^{1/2}$ is bounded  in any Kor\'anyi region for $j=2,\ldots, n$.
\item[(1$^{'''}$)] $(1-z_1)^{1/2}\langle df_z(e_1), e_j\rangle$  is bounded  in any Kor\'anyi region for $j=2,\ldots, n$.
\item[(2)] $\angle_K\lim_{z\to e_1}\frac{ 1-\la f(z),e_1\ra}{1-z_1}=\al_f(e_1)$,
\item[(3)] $\angle_K\lim_{z\to e_1}\langle df_z(e_1), e_1\rangle=\al_f(e_1)$,
\item[(4)] $\angle_K\lim_{z\to e_1}\langle df_z(e_j), e_1\rangle=0$ for $j=2,\ldots, n$.
\item[(5)] $\angle_K\lim_{z\to e_1}\frac{\la f(z), e_j\ra}{(1-z_1)^{1/2}}=0$ for $j=2,\ldots, n$.
\item[(6)] $\angle_K\lim_{z\to e_1}(1-z_1)^{1/2}\langle df_z(e_1), e_j\rangle=0$ for $j=2,\ldots, n$.
\end{itemize}
\end{theorem}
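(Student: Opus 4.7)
The plan is to follow Rudin's original approach: combine a ball version of Julia's lemma with slicewise application of the one-variable JWC theorem, and then use a Lindel\"of-type principle to promote limits along the axis $\{\zeta e_1: \zeta\in\D\}$ to restricted $K$-limits. Setting $\al:=\al_f(e_1)$, the first step is the Julia inequality on the ball,
\[
\frac{|1-\la f(z), e_1\ra|^2}{1-\|f(z)\|^2}\le \al \cdot \frac{|1-z_1|^2}{1-\|z\|^2},\qquad z\in \B^n,
\]
obtained by applying the Schwarz-Pick lemma of $\B^n$ to the pairs $(z,z_k)$ and $(f(z),f(z_k))$, where $\{z_k\}$ realizes the liminf defining $\al$, and then passing to the limit using the boundary asymptotics of the Kobayashi distance at $e_1$. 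An immediate consequence is $f(K(e_1,R))\subset K(e_1,\al R)$, which proves $K\hbox{-}\lim_{z\to e_1}f(z)=e_1$.

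Second, I would apply the classical one-variable JWC theorem to the slice map $\v(\zeta):=\la f(\zeta e_1),e_1\ra$, a self-map of $\D$ with boundary regular fixed point $1$ and boundary dilation $\al_\v(1)\le \al$. The 1D theorem yields
\[
\angle\lim_{\zeta\to 1}\frac{1-\v(\zeta)}{1-\zeta}=\angle\lim_{\zeta\to 1}\v'(\zeta)=\al_\v(1),
\]
and a sequence realizing the liminf defining $\al$ (whose asymptotic radiality follows from Julia's inequality together with the monotonicity of the Julia quotient along horospheres) forces $\al_\v(1)=\al$. This establishes (2) and (3) along the $e_1$-axis. For $j\ge 2$, Julia's inequality combined with $|\la f(z),e_j\ra|^2\le 1-\|f(z)\|^2$ gives the pointwise bound $|\la f(z),e_j\ra|\le C|1-z_1|^{1/2}$ in $K(e_1,R)$, which is (5) pointwise. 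The derivative bounds (1$^{\prime}$), (1$^{\prime\prime}$), (1$^{\prime\prime\prime}$) and the derivative limits (4), (6) then follow by Cauchy integration on the anisotropic polydisc
\[
P(z):=\bigl\{w\in\C^n:|w_1-z_1|<c(1-|z_1|),\ |w_j-z_j|<c(1-|z_1|)^{1/2},\ j\ge 2\bigr\},
\]
with $c=c(R)$ small enough that $P(z)\subset \B^n$ for $z\in K(e_1,R)$; differentiating the Cauchy integral in $w_1$ produces a factor of $(1-|z_1|)^{-1}$ and in $w_j$ a factor of $(1-|z_1|)^{-1/2}$, which exactly match the claimed scalings.

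Finally, the axial (non-tangential) limits of (2)--(6) must be upgraded to restricted $K$-limits. Here I would invoke the higher-dimensional Lindel\"of principle for bounded holomorphic functions on $\B^n$ (\cite{Ru}, \cite{Abate}): once (1$^{\prime}$)--(1$^{\prime\prime\prime}$) supply uniform boundedness along the appropriate anisotropically-scaled tubes inside Koranyi regions, the existence of a non-tangential limit along $\{\zeta e_1\}$ for each of the quantities involved in (2)--(6) forces the existence of the restricted $K$-limit with the same value. The principal obstacle is precisely this last step: in several variables a non-tangential limit is strictly weaker than a restricted $K$-limit, and bridging the gap requires the Lindel\"of principle carefully paired with the anisotropic Cauchy estimates of the previous step. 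The second delicate point is the identification $\al_\v(1)=\al$ in the slice analysis, which depends on extracting a sequence realizing the liminf in the definition of $\al_f(e_1)$ that is asymptotically radial---a consequence of Julia's inequality and horosphere monotonicity that nonetheless requires careful setup.
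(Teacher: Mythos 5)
The paper does not prove this statement: Theorem \ref{RudinJWC} is quoted as Rudin's result with a citation to \cite{Ru}, so there is no in-paper argument to compare against, and I am measuring your proposal against the standard proof in Chapter 8 of \cite{Ru}. Your architecture --- Julia's lemma on the ball, the one-variable Julia--Wolff--Carath\'eodory theorem applied to the slice $\zeta\mapsto\la f(\zeta e_1),e_1\ra$, anisotropic Cauchy estimates for the derivative bounds, and the $\check{\hbox{C}}$irca--Lindel\"of principle to pass from radial limits to restricted $K$-limits --- is exactly that of Rudin's proof, and parts (1$^{'}$)--(4) would go through essentially as you describe (indeed (4) is immediate once (1$^{''}$) is established, since the bound $\la df_z(e_j),e_1\ra=O(|1-z_1|^{1/2})$ already forces the limit to be $0$). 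One small imprecision: Julia's lemma maps horospheres into horospheres, not Kor\'anyi regions into Kor\'anyi regions; the conclusion $K\hbox{-}\lim f=e_1$ still follows, but from the horosphere statement.

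There are, however, two genuine problems around (5) and (6). First, the inequality $|\la f(z),e_j\ra|^2\le 1-\|f(z)\|^2$ is false (take $f(z)=(0,9/10)$); what is true is $|\la f(z),e_j\ra|^2\le 1-|\la f(z),e_1\ra|^2\le 2|1-\la f(z),e_1\ra|$, which combined with Julia's control of $|1-\la f(z),e_1\ra|$ in a Kor\'anyi region does give the $O(|1-z_1|^{1/2})$ bound. Second, and more seriously, that bound is all your argument ever produces, whereas (5) asserts a restricted $K$-limit equal to $0$: a zero limit cannot be extracted from boundedness plus Cauchy integration. The standard route is to play the exact Julia inequality, used as a \emph{lower} bound on $1-\|f(z)\|^2$, against the already-established limit (2): radially one finds $1-|\la f(re_1),e_1\ra|^2=2\al_f(e_1)(1-r)+o(1-r)$ and $1-\|f(re_1)\|^2\ge 2\al_f(e_1)(1-r)+o(1-r)$, so that $|\la f(re_1),e_j\ra|^2\le (1-|\la f,e_1\ra|^2)-(1-\|f\|^2)=o(1-r)$; only then does the $\check{\hbox{C}}$irca theorem upgrade this radial limit to the restricted $K$-limit in (5). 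The same omission propagates to (6), whose radial limit computation requires the $o((1-r)^{1/2})$ smallness of $\la f,e_j\ra$ on the discs entering the Cauchy estimate, not merely its boundedness. Until this cancellation argument is supplied, statements (5) and (6) are not proved.
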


One can interpret Julia-Wolff-Carath\'eodory's theorem as a description of the first jet of a holomorphic self-map of the unit ball at a boundary regular fixed point.

One of the aims of the present paper is to give a corresponding
theorem for  infinitesimal generator (that is, {\sl $\R$-semicomplete holomorphic  vector fields}) on
$\B^n$ having a ``regular singularity'' at $e_1$.

A holomorphic vector field $G:\B^n \to \C^n$ is said to be an {\sl infinitesimal generator} if the Cauchy problem
\begin{equation}\label{Cauchy}
\begin{cases}
\stackrel{\bullet}{x}(t)=G(x(t))\\
x(0)=z_0
\end{cases}
\end{equation}
has a solution $x_{z_0}:[0,+\infty)\ni t\mapsto x(t)$ for all $z_0\in \B^n$. If this is the case, the map $\phi:[0,+\infty)\times \B^n\mapsto \B^n$ given by $\phi_t(z):=x_z(t)$ is real analytic and $z\mapsto \phi_t(z)$ is a univalent holomorphic self-map of $\B^n$ for all fixed $t\in[0,+\infty)$. The family $(\phi_t)$ is a (continuous) semigroup, namely a continuous morphism of semigroups between $(\R^+,+)$ endowed with the Euclidean topology and $({\sf Hol}(\B^n,\B^n),\circ)$ endowed with the topology of uniform convergence on compacta.

Conversely, any (continuous) semigroup of holomorphic self-maps of
$\B^n$ is associated uniquely to an infinitesimal generator.
Interior fixed points of the semigroups correspond to
singularities of the vector field. At the boundary, the situation is
more complicated (see Sections \ref{due} and \ref{tre}). For the
time being, we say that $e_1$ is a {\sl boundary regular null
point} (or BRNP for short) if it is a boundary regular fixed point
for the associated semigroup of holomorphic self-maps and we say
that $\beta\in \R$ is the {\sl dilation} of $G$ at $e_1$ if the
flow $\phi_1$ of $G$ at the time $1$ has boundary dilation
coefficient $\al_{\phi_1}(e_1)=e^{\beta}$ (see Definition
\ref{defBRNP} for a definition of BRNP which does not involve the
associated semigroup).

Now, a version of the Julia-Wolff-Carath\'eodory Theorem for
infinitesimal generators which we are going to prove is the
following:

\begin{theorem}\label{JWC}
Let $G:\B^n\to \C^n$ be an infinitesimal generator. Suppose that
\begin{equation}\label{ipo}
\begin{split}
(\ast)\quad&\B^n\ni z\mapsto\frac{|\la G(z),e_1\ra|}{|z_1-1|} \quad \hbox{is  bounded in any Kor\'anyi region and} \\ (\ast\ast)\quad& \B^n\ni z\mapsto\frac{|\langle G(z),e_j\rangle|}{|z_1-1|^{1/2}} \quad \hbox{is  bounded in any Kor\'anyi region for $j=2,\ldots, n$.}
\end{split}
\end{equation}
Then $e_1$ is a boundary regular null point for $G$. Moreover, let $\beta\in\R$ denote the dilation of $G$ at $e_1$. Then
\begin{itemize}
\item[(1$^{'}$)] $\la dG_z(e_1),e_1\ra$ and $\la dG_z(e_h),e_k\ra$ are bounded in any Kor\'anyi region for $h,k=2,\ldots, n$,
\item[(1$^{''}$)] $\la dG_z(e_j), e_1\ra /(1-z_1)^{1/2}$ is bounded in any Kor\'anyi region for $j=2,\ldots, n$,
\item[(1$^{'''}$)] $(1-z_1)^{1/2}\la dG_z(e_1), e_j\ra$ is bounded in any Kor\'anyi region for $j=2,\ldots, n$,
\item[(2)] $\angle_K\lim_{z\to e_1}\frac{ \la G(z),e_1\ra}{z_1-1}=\beta$,
\item[(3)] $\angle_K\lim_{z\to e_1}\langle dG_z(e_1), e_1\rangle=\beta$,
\item[(4)] $\angle_K\lim_{z\to e_1}\langle dG_z(e_j), e_1\rangle=0$ for $j=2,\ldots, n$.
\end{itemize}
\end{theorem}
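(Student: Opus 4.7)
The plan is to reduce to Rudin's Theorem~\ref{RudinJWC} applied to each time-$t$ map $\phi_t$ of the semigroup generated by $G$, and then transfer the conclusions back to $G$ itself by differentiating in $t$ at $t = 0$ via the generating equation $G(z) = \partial_t \phi_t(z)|_{t=0}$ together with the variational equation $\partial_t D\phi_t(z) = DG(\phi_t(z))\, D\phi_t(z)$.

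The first task is to establish that $e_1$ is a BRNP and to identify its dilation. Since the hyperplane $\{z_1 = 1\}$ meets $\oB^n$ only at $e_1$, the function $h(z) := \langle G(z), e_1\rangle/(z_1 - 1)$ is holomorphic on $\B^n$, and by hypothesis $(\ast)$ it is bounded in every Kor\'anyi region. A Lindel\"of-type argument then produces a complex value $\beta := \angle_K\lim_{z\to e_1} h(z)$, and the semicompleteness of $G$ (through the characterization of BRNP developed in the preceding sections of the paper) forces $\beta \in \R$. Once BRNP is in hand, each $\phi_t$ has $e_1$ as a boundary regular fixed point with $\alpha_{\phi_t}(e_1) = e^{t\beta}$, so Theorem~\ref{RudinJWC} applies to every $\phi_t$.

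The transfer is then carried out statement by statement. For (2), integration of the flow yields
\[
\frac{1}{t}\left(1 - \frac{1 - \langle \phi_t(z), e_1\rangle}{1 - z_1}\right) = \frac{1}{t(1 - z_1)}\int_0^t \langle G(\phi_s(z)), e_1\rangle\, ds ;
\]
the $\angle_K$-limit in $z$ of the left-hand side equals $(1 - e^{t\beta})/t$ by Rudin, which tends to $-\beta$ as $t \to 0^+$, while for fixed $z$ the right-hand side tends to $\langle G(z), e_1\rangle/(1 - z_1)$ as $t \to 0^+$; interchanging the two limits delivers (2). Statements (3) and (4) follow analogously from Rudin's $\angle_K$-limits for $\langle d(\phi_t)_z(e_1), e_1\rangle$ and $\langle d(\phi_t)_z(e_j), e_1\rangle$ by expanding these via the variational equation and passing to $t \to 0^+$; the boundedness claims (1$^{'}$)--(1$^{'''}$) are obtained by the same expansion together with Rudin's boundedness for $D\phi_t$.

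The principal technical obstacle is justifying the interchange of $\lim_{t\to 0^+}$ with $\angle_K\lim_{z\to e_1}$. The key input is a Julia-type invariance: each forward orbit $\phi_s$ sends $K(e_1, R)$ into $K(e_1, e^{s\beta}R)$, so combining this with the growth estimates $(\ast)$ and $(\ast\ast)$ yields uniform bounds on $\langle G(\phi_s(z)), e_1\rangle/(z_1 - 1)$ and on $\langle G(\phi_s(z)), e_j\rangle/(z_1 - 1)^{1/2}$ for $z$ in a given Kor\'anyi region and $s$ in a small time interval. A normal-families (Vitali) argument then legitimizes the interchange of limits and completes the proof.
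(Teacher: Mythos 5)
Your proposal follows exactly the route the authors explicitly set aside in the introduction: deducing the theorem from Rudin's Theorem \ref{RudinJWC} applied to each time-$t$ map $\phi_t$ and then differentiating at $t=0$. The paper's actual proof never uses the semigroup: it obtains the boundedness statements (1$^{'}$)--(1$^{'''}$) by Cauchy-formula estimates on discs of radius $\delta|1-z_1|$ or $\delta|1-z_1|^{1/2}$ that stay inside a larger Kor\'anyi region (Rudin's Lemma 8.5.5), gets the radial limits from the one-dimensional theory applied to the slice reductions $g_v$ (Theorem \ref{semign}, Theorem \ref{one-gv}, Proposition \ref{beta}), and upgrades radial limits to restricted $K$-limits via $\check{\hbox{C}}$irca's theorem. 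The difference matters because your transfer argument has genuine gaps.

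First, your ``key input'' --- that $\phi_s$ maps $K(e_1,R)$ into $K(e_1,e^{s\beta}R)$ --- is not Julia's lemma. The invariance $u_{\B^n}(\phi_t(z))\le e^{-t\beta}u_{\B^n}(z)$ concerns horospheres, which are ellipsoids internally tangent to the sphere, not Kor\'anyi regions, which are approach regions; a self-map with a boundary regular fixed point need not carry Kor\'anyi regions into Kor\'anyi regions, and establishing any statement of that kind already requires JWC-type information about $\phi_s$. Second, even granting it, interchanging $\lim_{t\to0^+}$ with $\angle_K\lim_{z\to e_1}$ in (1$^{'}$)--(1$^{'''}$), (3) and (4) requires estimates such as $|\langle d(\phi_t)_z(e_1),e_1\rangle-1|\le Ct$ uniformly on a Kor\'anyi region; Rudin's theorem applied to each fixed $\phi_t$ only yields bounds whose constants depend on $t$ in an uncontrolled way, and producing the needed uniformity amounts to bounding $dG_{\phi_s(z)}$ on Kor\'anyi regions --- that is, to the very statements (1$^{'}$)--(1$^{'''}$) under proof, so the argument is circular. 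Third, your identification of $\beta$ is incorrect as stated: boundedness of $\langle G(z),e_1\rangle/(z_1-1)$ in Kor\'anyi regions plus ``a Lindel\"of-type argument'' does not produce a limit; $\check{\hbox{C}}$irca's theorem requires, in addition to boundedness, the existence of the limit along a special (e.g.\ radial) curve, which must be supplied separately (the paper gets it from Theorem \ref{semign}). The first two gaps are precisely why the authors remark that this way of proceeding ``does not seem to be possible'' when no regularity of $G$ at $e_1$ is assumed.
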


In the case where the infinitesimal generator extends smoothly
past $e_1$, Theorem \ref{JWC} is a consequence of Theorem
\ref{RudinJWC} applied to the associated semigroup. However, if no
regularity is assumed, this way of proceeding does not seem to be
possible. Our proof, in fact, does not involve the associated
semigroup, but it is based on the properties of infinitesimal
generators, and it is contained in Section \ref{sectJWC}. In
particular, we shall prove an intermediate version  of the
Julia-Wolff-Carath\'eodory theorem assuming only
hypothesis \eqref{ipo}.$(\ast)$ (see Proposition \ref{JWC-1}). In Example \ref{esempio}, we give an example of an infinitesimal generator which satisfies \eqref{ipo}.$(\ast)$ but not \eqref{ipo}.$(\ast\ast)$ and for which some  implications of Theorem \ref{JWC} do not hold. In Subsection \ref{discuto} we discuss the (dis)similarities between Theorem \ref{RudinJWC} and Theorem \ref{JWC} and some natural open questions raised up from this work.

Next, in Section \ref{sec-higher}, assuming a $C^3$ regularity at the BRNP $e_1$,  we describe the jets space of infinitesimal generators, giving complementary results to the ones obtained in \cite{BZ} for local biholomorphisms of strongly (pseudo)convex domains. In particular, we are interested in finding (minimal, pointwise) necessary and sufficient conditions  for an infinitesimal generator to generate a group of automorphisms of $\B^n$. In case of an interior singularity, the condition is rather simple: an infinitesimal generator $G$ with a singularity at $z_0\in \B^n$ generates a group of automorphisms of $\B^n$ if and only if the spectrum of $dG_{z_0}$ is contained in the imaginary axis $i\R$.

In the case where the singularity is at the boundary, we prove the
following result:

\begin{theorem}\label{rigidity}
Let $G$ be an infinitesimal generator on $\B^n$ of class $C^3$ at $e_1$. Assume that $e_1$ is a boundary regular null point with dilation $\beta\in \R$. Then $G$ generates a group of automorphisms if and only if the following conditions are satisfied:
\begin{enumerate}
  \item $\Re \la \frac{\de G}{\de z_k}(e_1),e_k\ra =\frac{\beta}{2}$, for $k=2,\ldots, n$,
  \item $\Re \la \frac{\de^2 G}{\de z_1\de z_k}(e_1),e_1\ra =\beta$ for $k=1,\ldots, n$,
  \item $\la \frac{\de^2 G}{\de z_1\de z_k}(e_1),e_h\ra=0$ for $2\leq k<h\leq n$,
  \item $\Re \la \frac{\de^3 G}{\de z_1^3}(e_1),e_1\ra=0$.
\end{enumerate}
Moreover, if the previous conditions are satisfied, then $G\equiv 0$ if and only if $\beta=0$, $\la \frac{\de G}{\de z_k}(e_1),e_k\ra =0$ for $k=2,\ldots, n$ and $\la \frac{\de^2 G}{\de z_1^2}(e_1),e_1\ra =0$.
\end{theorem}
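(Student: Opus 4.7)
The strategy is to combine the explicit classification of the Lie algebra $\Lie{aut}(\B^n)$ of infinitesimal generators of the automorphism group with the $3$-jet analysis and boundary rigidity for infinitesimal generators developed in Section~\ref{sec-higher}. Recall that every element of $\Lie{aut}(\B^n)$ is uniquely written as a polynomial vector field $H(z) = v + Tz - \la z, v\ra z$ with $v \in \C^n$ and $T$ skew-Hermitian; conversely, any such $H$ is real-tangent to $\de\B^n$ and hence generates a one-parameter group of automorphisms of $\B^n$.

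For the \emph{necessity} direction, I assume $G$ has the above polynomial form and that $e_1$ is a BRNP with dilation $\beta$. The vanishing $G(e_1) = 0$ pins down $Te_1$ in terms of $v$, and Theorem~\ref{JWC}(3) identifies $\beta$ with a real combination of entries of $v$. The identities (1)--(4) then reduce to algebraic consequences obtained by computing the relevant first, second and third partial derivatives of the explicit quadratic $H$ and invoking the structural properties of skew-Hermitian matrices (purely imaginary diagonal entries and the relations $T_{jk} = -\overline{T_{kj}}$). Condition (4) comes for free from the fact that quadratic polynomials have vanishing third derivatives.

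For the \emph{sufficiency} direction, the plan is to construct a matching element of $\Lie{aut}(\B^n)$. Starting from $G$, an infinitesimal generator of class $C^3$ at the BRNP $e_1$ satisfying (1)--(4), I solve the linear system on $(v, T)$ forced by (1)--(4), the BRNP condition and the dilation $\beta$ to produce $H(z) = v + Tz - \la z, v\ra z$ with $T$ skew-Hermitian whose $3$-jet at $e_1$ agrees with that of $G$. One then invokes the boundary rigidity theorem for $C^3$ infinitesimal generators at a BRNP, proved earlier in Section~\ref{sec-higher}: since $G$ and $H$ are both infinitesimal generators with the same $3$-jet at $e_1$, they must coincide, so $G \equiv H \in \Lie{aut}(\B^n)$. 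For the \emph{moreover} part, once $G$ is written in the form $v + Tz - \la z,v\ra z$, imposing $\beta = 0$ together with the three extra vanishing hypotheses translates into linear equations that, combined with the constraints already derived from (1)--(4) and the BRNP, force $v = 0$ and $T = 0$, hence $G \equiv 0$.

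The \emph{main obstacle} is the construction of the matching $H \in \Lie{aut}(\B^n)$ in the sufficiency step: one must verify that (1)--(4) are exactly the additional identities (beyond those automatically forced on the $3$-jet at a BRNP by the semigroup-generation property established in Section~\ref{sec-higher}) needed to cut the $3$-jets of $C^3$ infinitesimal generators down to the image of the natural jet map from $\Lie{aut}(\B^n)$. This reduces to a careful dimension count matching the free parameters of $(v, T)$ against the remaining unconstrained coefficients of the $3$-jet; once the correspondence is in place, the appeal to boundary rigidity upgrades $3$-jet agreement to a pointwise identity of vector fields and completes the proof.
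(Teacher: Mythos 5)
Your necessity direction is essentially fine: once $G$ generates a group it lies in $\mathfrak{aut}(\B^n)$, hence has the quadratic polynomial form $v+Tz-\la z,v\ra z$ with $T$ skew-Hermitian, and (1)--(4) follow by direct differentiation. Note, though, that the paper never uses this normal form at all: it adds the hyperbolic generator $H_\beta$ of \eqref{hhh} to reduce to dilation $0$ (Lemma \ref{trickP}), and then \emph{both} directions follow from Proposition \ref{almost}, i.e.\ from the slice-reduction dictionary of Propositions \ref{basic} and \ref{gruppo} combined with the one-variable fact (\cite{Sh}) that a generator $g$ on $\D$ with $g'(1)=\Re g''(1)=g'''(1)=0$ generates a group of automorphisms of $\D$. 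So your route is genuinely different, at least in intent.

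The sufficiency direction of your proposal has a real gap. You want to build $H\in\mathfrak{aut}(\B^n)$ whose $3$-jet at $e_1$ equals that of $G$ and conclude $G\equiv H$ from the Burns--Krantz-type rigidity for generators (which, incidentally, is not proved in Section \ref{sec-higher}; it is only quoted there from \cite{ELRS} and \cite{Ca}). The rigidity step is legitimate in principle, since the cone property makes $G-H=G+(-H)$ a generator; but it needs $G-H$ to vanish to third order at $e_1$, i.e.\ the \emph{entire} $3$-jet of $G$ must coincide with that of a quadratic polynomial. Conditions (1)--(4) impose only a handful of scalar constraints and say nothing about, for instance, the coefficients $\la\frac{\de^2 G}{\de z_j\de z_k}(e_1),e_1\ra$ for $j,k\ge 2$, the mixed coefficients $\la\frac{\de^3G}{\de z_1^2\de z_j}(e_1),e_1\ra$, or any third-order term of the transverse components $G_j$, $j\ge 2$ --- all of which vanish for a quadratic $H$ but are a priori arbitrary for $G$. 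Showing that they are in fact forced to vanish is not ``a careful dimension count'': it is precisely the hard analytic content of Proposition \ref{basic}, which extracts these vanishings degree by degree in $\al$ from the Berkson--Porta positivity of the slice reductions $g_v$. Without that analysis your matching $H$ cannot be constructed; with it, you have essentially reproduced the paper's argument, at which point Proposition \ref{gruppo} closes the proof more cheaply than the jet-matching/rigidity detour, since it never needs to control the full $3$-jet of $G$ at all.
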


The assumption on the $C^3$ regularity of $G$ at $e_1$ can be
lowered by assuming the existence of an expansion of $G$ in any
Kor\'anyi region with vertex $e_1$, but for the sake of clarity, we
will deal only with the $C^3$ case.

The previous result belongs to the family of so-called ``rigidity phenomena'', where some minimal conditions on the maps/infinitesimal generators of $\B^n$ at one point imply certain specific forms. For instance, the well known Burns-Krantz rigidity theorem \cite{BK} states that a holomorphic self-map of the unit ball which is the identity up to the third order at a boundary point, is the identity {\sl tout court}. Such a result has been extended later to infinitesimal generators (see \cite{ELRS}, and also \cite{Ca}), in the following way:  an infinitesimal generator in $\B^n$ which is $0$ up to the third order at a boundary point of $\B^n$ is identically zero. In a sense, Theorem \ref{rigidity} is a quantitative version of such rigidity phenomena.

The main idea for the proof is to transfer the information on $G$
to a family of infinitesimal generators on $\D$ by means of a
method which we call  ``slice reduction'' (see Section \ref{tre}),
first introduced in \cite{BCD} and implemented here.

Finally, in Section \ref{quadratic} we show with a couple of examples that, contrarily as one might expect, the slice reductions do not preserve the boundary expansion: while in the one dimensional case the quadratic expansion at a BRNP of an infinitesimal generator is always an infinitesimal generator which generates a semigroup of linear fractional maps, in higher dimension this is no longer the case. Moreover, even if the quadratic expansion is an infinitesimal generator, the generated semigroup might not be linear fractional.

\medskip

This work was carried out while both authors where visiting the
Mittag-Leffler Institute during the program ``Complex Analysis
and Integrable Systems'' in Fall 2011. Both authors thank the
organizers and the Institute for the kind hospitality and the
atmosphere experienced there.

\smallskip

The authors  thank Mark Elin, Marina Levenshtein, and Jasmin Raissy for useful comments on a preliminary version of the manuscript.

They also warmly thank the referee for his/her precious comments and remarks which improved a lot the paper. In particular they are in debts for his/her suggestion to use weights in the statement of Theorem \ref{JWC} and for having found a mistake in the original statement of Theorem \ref{rigidity}.

\section{Infinitesimal generators on the unit ball and BRNP's}\label{due}

Infinitesimal generators have been characterized in several ways. In the unit disc $\D$, the following powerful characterization is due to Berkson-Porta formula \cite{BP}: a holomorphic vector field $g:\D\to \C$ is an infinitesimal generator if and only if there exist $\tau\in \oD$ and $p:\D\to \{z\in \C: \Re z\geq 0\}$ such that
\begin{equation}\label{Berkson-Porta}
g(\zeta)=(\tau-\zeta)(1-\overline{\tau}\zeta)p(\zeta).
\end{equation}

In the multi-dimensional case, several equivalent
characterizations are given both using Euclidean inequalities (see
\cite{RS}), the Kobayashi metric (see \cite{Abate}) and
pluripotential theory (see \cite{BCD}). In what follows, we need a
characterization only for boundary regular fixed points, which we
are going to define.

The function
\[
u_{\B^n}(z):=-\frac{1-\|z\|^2}{|1-z_1|^2}
\]
is the pluricomplex Poisson kernel with a pole at $e_1$ and its
sublevel sets $\{u_{\B^n}(z)<-1/R\}$ for $R>0$ are called {\sl
horospheres} with center $e_1$ and radius $R$.

Recall that a function $f:\B^n\to \C^m$ is $C^k$ at $e_1$ if $f$ and all its partial derivatives up to order $k$ extend continuously to $e_1$. As the horospheres are smooth ellipsoids, by Whitney's extension theorem, this is equivalent to saying that for each horosphere $E$ with center $e_1$ there exists a function $\tilde{f}$ (depending on $E$) of order $C^k$ defined in a open neighborhood of $\overline{E}$ such that $\tilde{f}|_{E}\equiv f$.

The following characterization of infinitesimal generators in terms of the function $u_{\B^n}$ has been proved in \cite[Theorem 3.11]{BCD}:

\begin{theorem}\label{char}
Let $G:\B^n\to \C^n$ be holomorphic and $C^1$ at $e_1$. If $d(u_{\B^n})_z\cdot G(z)\leq 0$ for all $z\in \B^n$ then $G$ is an infinitesimal generator.
\end{theorem}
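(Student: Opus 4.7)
The plan is to show that horosphere invariance, together with the $C^1$ regularity at $e_1$, forces the local flow of $G$ to extend to all positive times while staying in $\B^n$.

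Since $G$ is holomorphic on $\B^n$ and $C^1$ at $e_1$, standard ODE theory (Picard--Lindel\"of) gives, for each $z_0\in\B^n$, a unique local solution $x_{z_0}:[0,T_{z_0})\to\B^n$ of the Cauchy problem \eqref{Cauchy}. Moreover, since $G$ extends as a $C^1$ vector field to a Euclidean neighborhood $U$ of $e_1$ in $\C^n$, we also have a local flow $\psi_t$ of $G$ defined on a neighborhood of $e_1$ in $\C^n$ for small $t$. The goal is to prove $T_{z_0}=+\infty$ and $x_{z_0}(t)\in\B^n$ for all $t\geq 0$.

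The first key step is horosphere invariance: set $v(t):=u_{\B^n}(x_{z_0}(t))$. By the chain rule and the hypothesis,
\[
\dot v(t)=d(u_{\B^n})_{x_{z_0}(t)}\cdot G(x_{z_0}(t))\leq 0,
\]
so $v$ is non-increasing. Hence $x_{z_0}(t)$ stays in the closed horosphere $\overline{\{u_{\B^n}\leq u_{\B^n}(z_0)\}}$. Since every closed horosphere is a compact subset of $\B^n\cup\{e_1\}$, the trajectory remains at a positive Euclidean distance from $\partial\B^n\setminus\{e_1\}$. By the standard escape-from-compact-sets criterion, the only way the local solution can fail to extend is to approach $e_1$ as $t\to T_{z_0}^-$.

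The second step handles this obstruction using the $C^1$ extension at $e_1$. If $T_{z_0}<+\infty$, then $x_{z_0}(t)\to e_1$ as $t\to T_{z_0}^-$, and we can solve the Cauchy problem for the $C^1$ extension of $G$ in $U$ starting at $e_1$, obtaining an integral curve $\psi$. By uniqueness (using the $C^1$ regularity), this $\psi$ matches the original solution on $[T_{z_0}-\delta,T_{z_0})$ and extends it past $T_{z_0}$. The horosphere invariance applies on the part of the extended trajectory lying in $\B^n$, so the extension remains trapped in the same closed horosphere; in particular, it cannot leave $\overline{\B^n}$, and once it re-enters $\B^n$ (or stays at $e_1$ if $G(e_1)=0$ and $e_1$ is a stationary point), we continue by bootstrap. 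Iterating, we obtain a solution defined for all $t\geq 0$ with values in $\overline{\B^n}$.

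Finally, holomorphic dependence on initial data for a holomorphic ODE on $\B^n$ shows that $\phi_t(z):=x_z(t)$ is holomorphic in $z$ for each fixed $t$. Combined with the Berkson--Porta/maximum principle style observation that a holomorphic map $\overline{\B^n}\to\overline{\B^n}$ which is not a unimodular rotation around a boundary fixed point stays in $\B^n$, we conclude $\phi_t(\B^n)\subset\B^n$, so $G$ is an infinitesimal generator. The main obstacle is the second step: patching the interior flow with the $C^1$ boundary extension at $e_1$ without losing the trapping provided by the horosphere inequality, and ensuring the combined trajectory is globally defined and remains holomorphic in the initial condition.
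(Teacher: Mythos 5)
First, a point of comparison: the paper does not actually prove Theorem \ref{char} --- it is quoted from \cite[Theorem 3.11]{BCD} --- so there is no internal proof to measure yours against. Judged on its own terms, your argument has a genuine gap exactly where you yourself flag ``the main obstacle''. Steps 1 and 2 are fine: the local solution exists, $t\mapsto u_{\B^n}(x_{z_0}(t))$ is non-increasing, the trajectory is trapped in a closed horosphere $\overline{E}\subset \B^n\cup\{e_1\}$, and the only possible finite-time escape is $x_{z_0}(t)\to e_1$. But the continuation past $e_1$ is not justified. The monotonicity of $u_{\B^n}$ along the flow is available only while the trajectory lies in $\B^n$ (indeed $u_{\B^n}$ is not even defined at $e_1$), so it says nothing about what the trajectory of the $C^1$ extension $\tilde G$ does immediately after reaching $e_1$: if, say, $\Re\la \tilde G(e_1),e_1\ra>0$, the extended curve leaves $\overline{\B^n}$ instantly and there is no ``part of the extended trajectory lying in $\B^n$'' to which invariance could be applied. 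The sentence ``the extension remains trapped in the same closed horosphere; in particular, it cannot leave $\overline{\B^n}$'' is therefore circular, and the concluding appeal to a maximum-principle statement for maps $\overline{\B^n}\to\overline{\B^n}$ is moot until a globally defined flow with values in $\overline{\B^n}$ has been produced.

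There are two honest ways to close the gap. (i) Show that the hypothesis forces $G(e_1)=0$: rewriting $d(u_{\B^n})_z\cdot G(z)\leq 0$ as $\Re\la G(z),z\ra\leq \Re\bigl(\frac{\la G(z),e_1\ra}{1-z_1}\bigr)(1-\|z\|^2)$ and letting $z\to e_1$ along suitable curves (with $1-z_1=\epsilon e^{i\theta}$ for the normal component, and along $z=(r,s\omega)$ with $s^2=\epsilon(1-r)$ for the tangential ones) yields $\la G(e_1),e_1\ra=0$ and then $G''(e_1)=0$. Once $\tilde G(e_1)=0$, the local Lipschitz bound $\|\tilde G(x)\|\leq L\|x-e_1\|$ gives $\|x(t)-e_1\|\geq \|x(0)-e_1\|e^{-Lt}$, so the trajectory can never reach the equilibrium $e_1$ in finite time; combined with your Step 2 this gives $T_{z_0}=+\infty$ and $x_{z_0}(t)\in\B^n$ for all $t$. (ii) Alternatively, and closer to the argument in \cite{BCD}: each closed horosphere $\overline{E}$ is a compact convex set on a neighborhood of which $G$ extends $C^1$, and $d(u_{\B^n})\cdot G\leq 0$ translates into the tangency condition $\Re\la G(z),\nu(z)\ra\leq 0$ at every point of $\de E\setminus\{e_1\}$ (with $\nu$ the outward normal), hence at $e_1$ as well by continuity; the classical flow-invariance theorem for convex sets (see \cite{ARS}, \cite{RS}) then makes $\overline{E}$ forward invariant, and since the horospheres exhaust $\B^n$ the flow is complete. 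Without one of these ingredients the proof does not go through.
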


Now we define BRNPs:

\begin{definition}\label{defBRNP}
Let $G$ be an infinitesimal generator in $\B^n$. The point $e_1$ is  a {\sl boundary regular null point}, or BRNP for short, if there exists $b\in \R$  such that
\[
(du_{\B^n})_z\cdot G(z)+ b u_{\B^n}(z)\leq 0\quad \forall z\in \B^n.
\]
The number
\[
\beta:=-\inf_{z\in \B^n} \frac{(du_{\B^n})_z\cdot G(z)}{u_{\B^n}(z)}
\]
is called  the {\sl dilation} of $G$ at $e_1$.
\end{definition}

According to \cite[Theorem 0.4]{BCD} (see also \cite{ERS}), if $G$
is an infinitesimal generator with the associated semigroup
$(\phi_t)$ then $e_1$ is a BRNP for $G$ with dilation $\beta$ if
and only if for all $t\geq 0$ it follows
\[
u_{\B^n}(\phi_t(z))\leq e^{-t\beta}u_{\B^n}(z)\quad \forall z\in \B^n.
\]
The number $e^{t\beta}$ is the so-called {\sl boundary dilation coefficient} of $\phi_t$ at $e_1$. The previous inequality means that a horosphere of center $e_1$ and radius $R>0$ is mapped into a horosphere with center $e_1$ and radius $e^{t\beta}R$.

\section{Slice reduction of infinitesimal generators and BRNPs}\label{tre}

Let
\[
\mathcal L_{e_1}:=\{v\in\C^n : \|v\|=1, \langle v, e_1\rangle =\al>0\}.
\]
Also, let
\begin{equation}\label{geodesic}
\v_v(\zeta):=\al(\zeta-1)v+e_1.
\end{equation}
It is easy to see that $\v_v:\D \to \B^n$ is holomorphic, and it is a complex geodesic, in the sense that it is an isometry
between the Poincar\'e distance in $\D$ and the Kobayashi distance in $\B^n$. Furthermore, it is well known (see, {\sl e.g.} \cite{Abate} and \cite[Section 1]{BPT}) that any complex geodesic $\eta:\D\to \B^n$ extends holomorphically through the boundary and moreover, if $e_1\in \eta(\de \D)$, then there exists an automorphism $\theta$ of the unit disc such that $\eta\circ \theta$ is of the form \eqref{geodesic}.

\begin{remark}\label{plinio}
A direct computation shows that $u_{\B^n}(\v_v(\zeta))=\frac{1}{\al^2}u_{\D}(\zeta)$ for all $\zeta\in \D$.
\end{remark}

For a vector $w\in \C^n$ we use the notation $w=(w_1,w'')\in \C\times \C^{n-1}$.
The holomorphic map $\rho_v: \B^n \to \C^n$ defined by
\begin{equation}\label{retract}
\rho_v(z_1,z''):=\left(\frac{z_1+\frac{1}{\al}\la z'', v''\ra +\frac{1-\al^2}{\al^2}(1-z_1), -\frac{(1-z_1)}{\al} v''}{1+\frac{1}{\al}\la z'', v''\ra +\frac{1-\al^2}{\al^2}(1-z_1)}\right)
\end{equation}
has the property that $\rho_v(\B^n)=\v_v(\D)$ and, moreover, $\rho_v \circ \v_v(\zeta)=\v_v(\zeta)$ for all $\zeta\in \D$.

Finally, we let $\tr_v:=\v_v^{-1}\circ \rho_v : \B^n\to \D$, {\sl i.e.}
\begin{equation}\label{projection}
\tr_v(z_1,z'')=1+\frac{\frac{1}{\al^2}(z_1-1)}{1+\frac{1}{\al}\la z'', v''\ra + \frac{1-\al^2}{\al^2}(1-z_1)}=\frac{\al\langle z,v\rangle}{1-z_1+\al\langle z, v\rangle}.
\end{equation}

Note that for every $w=(w_1,w'')\in \C^n$ it follows that
\[
d(\tr_v)_{\v_v(\zeta)}(w_1,w'')=\frac{1}{\al^2}w_1+\frac{1-\al^2}{\al^2}(\zeta-1)w_1-\frac{1}{\al}(\zeta-1)\la w'',v''\ra.
\]

\begin{definition}
Let $G:\B^n\to \C^n$ be a holomorphic vector field. Let
\[
g_v(\zeta):=d(\tr_v)_{\v_v(\zeta)}(G(\v_v(\zeta))).
\]
 We call the holomorphic vector field $g_v:\D\to \C$ the {\sl slice reduction of $G$ to $v$}.
\end{definition}

More explicitly

\begin{equation}\label{explicit}
\begin{split}
g_v(\zeta)&=\frac{1}{\al^2}G_1(\v_v(\zeta))+\frac{1-\al^2}{\al^2}(\zeta-1)G_1(\v_v(\zeta))-\frac{1}{\al}(\zeta-1)\la G''(\v_v(\zeta)),v''\ra\\
&=\frac{1}{\al^2}\zeta G_1(\v_v(\zeta))-\frac{\zeta-1}{\al}\la G(\v_v(\zeta)),v\ra.
\end{split}
\end{equation}

The following version of Julia's lemma for infinitesimal generators  was proved in \cite[Theorem 0.4]{BCD} (see also \cite[Theorem p.403]{ERS} and, for the one-dimensional case, see  \cite[Theorem 1]{CDP}):

\begin{theorem}\label{one-gv}
Let $G:\B^n\to \C^n$ be an infinitesimal generator. Then the following are equivalent:
\begin{enumerate}
\item $G$ has BRNP at $e_1$ and dilation $\beta_0\leq \beta\in \R$,
\item $d(u_{\B^n})_z\cdot G(z)+\beta u_{\B^n}(z)\leq 0$ for all $z\in \B^n$,
\item $\displaystyle{\frac{\Re \la G(z),z\ra}{1-\|z\|^2}- \Re \frac{\la G(z),e_1\ra }{1-z_1}}\leq\frac{\beta}{2}$,  for all $z\in \B^n$,
\item for each $v\in \mathcal L_{e_1}$ the slice reduction $g_v$ is an infinitesimal generator of the unit disc with BRNP at $1$ and dilation $\leq \beta$.
\item there exists $C>0$ such that for all $v\in \mathcal L_{e_1}$ it follows
\[
\limsup_{(0,1)\ni r\to 1}\frac{|g_v(r)|}{1-r}\leq C.
\]
Moreover, if the previous condition is satisfied, then $1$ is a BRNP for $g_v$ and the following non-tangential limit exist
\[
\angle\lim_{\zeta\to 1}g_v'(\zeta)=\angle\lim_{\zeta\to 1}\frac{g_v(\zeta)}{\zeta-1}=\beta_v\in \R,
\]
with $\beta_v\leq \beta$ and
\[
\beta_0=\sup_{v\in \mathcal L_{e_1}}\beta_v.
\]
\end{enumerate}
\end{theorem}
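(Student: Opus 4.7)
My plan is to cycle through (1) $\Leftrightarrow$ (2) $\Leftrightarrow$ (3) by direct manipulation, and then establish (2) $\Leftrightarrow$ (4) $\Leftrightarrow$ (5) by reducing everything to the one-dimensional situation on $\D$ via the slice reduction.

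The equivalence (1) $\Leftrightarrow$ (2) is immediate from Definition \ref{defBRNP}, since the dilation is defined as the infimum of all $\beta$ rendering the inequality valid. For (2) $\Leftrightarrow$ (3), I would apply Wirtinger calculus to $u_{\B^n}=(\|z\|^2-1)/|1-z_1|^2$. Computing the holomorphic partial derivatives of numerator and denominator yields
\[
d(u_{\B^n})_z\cdot G(z)=\frac{2\Re\la G(z),z\ra}{|1-z_1|^2}-\frac{2(1-\|z\|^2)}{|1-z_1|^2}\Re\frac{\la G(z),e_1\ra}{1-z_1},
\]
so that substituting into (2) and dividing through by the positive factor $2(1-\|z\|^2)/|1-z_1|^2$ gives exactly (3).

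The core of the argument is the slice reduction step (2) $\Leftrightarrow$ (4). Fix $v\in \mathcal L_{e_1}$ with $\la v,e_1\ra=\al$, and evaluate (2) at $z=\v_v(\zeta)$. By Remark \ref{plinio}, $u_{\B^n}(\v_v(\zeta))=\al^{-2}u_\D(\zeta)$; the chain rule then gives $d(u_{\B^n})_{\v_v(\zeta)}(d(\v_v)_\zeta w)=\al^{-2}d(u_\D)_\zeta(w)$ for every real tangent vector $w$. The key identity I would need to upgrade this to is
\[
d(u_{\B^n})_{\v_v(\zeta)}\cdot G(\v_v(\zeta))=\al^{-2}d(u_\D)_\zeta\cdot g_v(\zeta).
\]
Writing $G(\v_v(\zeta))=d(\v_v)_\zeta(g_v(\zeta))+N(\zeta)$, the remainder $N(\zeta)$ lies in $\ker d(\tr_v)_{\v_v(\zeta)}$ (since $d(\tr_v)\circ d(\v_v)=\id$), so the identity reduces to the claim that $d(u_{\B^n})_{\v_v(\zeta)}$, viewed as a real linear functional on $\C^n$, annihilates $\ker d(\tr_v)_{\v_v(\zeta)}$. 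This I would verify by direct Wirtinger computation using \eqref{retract} and \eqref{projection}: the complex $(n-1)$-dimensional kernel splits into the subspace $\{(0,w''):w''\perp v''\}$ plus one explicit transverse direction $(1,cv'')$, and in both cases the explicit partials $\de u_{\B^n}/\de z_j$ at $\v_v(\zeta)$ produce a clean cancellation reflecting the geodesic nature of $\v_v$. Granted the upgraded identity, (2) at $\v_v(\zeta)$ becomes the one-dimensional inequality $d(u_\D)_\zeta\cdot g_v(\zeta)+\beta u_\D(\zeta)\leq 0$; applying the one-dimensional analogue of Theorem \ref{char}, combined with Berkson--Porta theory \cite{BP}, then yields that $g_v$ is an infinitesimal generator on $\D$ with BRNP at $1$ and dilation $\leq\beta$. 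Conversely, since every point of $\B^n\setminus\{e_1\}$ lies on some geodesic $\v_v(\D)$, the pointwise validity of the slice inequality for all $v$ recovers (2).

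For (4) $\Leftrightarrow$ (5) and the supremum formula, I would invoke the one-dimensional Julia lemma for infinitesimal generators (see \cite[Theorem 1]{CDP}): in $\D$, BRNP at $1$ with dilation $\beta_v$ is equivalent to $\limsup_{(0,1)\ni r\to 1}|g_v(r)|/(1-r)<\infty$, and both angular limits in (5) exist and equal $\beta_v$. Taking the supremum over $v\in\mathcal L_{e_1}$ then matches $\beta_0$ by its variational description. The principal obstacle is the vanishing of $d(u_{\B^n})_{\v_v(\zeta)}$ on $\ker d(\tr_v)_{\v_v(\zeta)}$: this is not a formal statement but a genuine cancellation encoding the precise interaction of the pluricomplex Poisson kernel with complex geodesics through $e_1$, and its verification in coordinates is where the bulk of the technical work lies; once secured, everything else is bookkeeping between $\B^n$ and the slice disc $\D$.
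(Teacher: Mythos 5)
First, a caveat: the paper does not actually prove Theorem \ref{one-gv} --- it is quoted from \cite[Theorem 0.4]{BCD} --- so there is no internal proof to measure you against. What the paper does prove is the identity your whole argument hinges on: your key identity $d(u_{\B^n})_{\v_v(\zeta)}\cdot G(\v_v(\zeta))=\al^{-2}\,d(u_\D)_\zeta\cdot g_v(\zeta)$ is exactly Lemma \ref{equal}(2) with $\delta=0$, which the paper obtains by the brute-force expansion of Lemma \ref{equal}(1) rather than by your kernel-annihilation argument. Your reformulation is correct and equivalent (applying Lemma \ref{equal}(2) to constant vector fields shows precisely that $d(u_{\B^n})_{\v_v(\zeta)}$ kills $\ker d(\tr_v)_{\v_v(\zeta)}$), but you have only named the cancellation, not verified it, and it is the entire technical content of the slice-reduction step. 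Your treatment of $(1)\Leftrightarrow(2)\Leftrightarrow(3)$ is fine and matches the formula for $d(u_{\B^n})_z\cdot G(z)$ displayed in the proof of Lemma \ref{equal}.

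Two genuine gaps remain. First, in $(2)\Rightarrow(4)$ you invoke ``the one-dimensional analogue of Theorem \ref{char}'' to conclude that $g_v$ is an infinitesimal generator; but Theorem \ref{char} requires $C^1$ regularity at the boundary point, which is not assumed in Theorem \ref{one-gv} (it is assumed only in Proposition \ref{cbDW}, where the paper does run exactly that argument). Here $G$ is a generator by hypothesis, and the correct route is the fact quoted at the start of Section \ref{tre} from \cite{BCD}: slice reductions of infinitesimal generators are infinitesimal generators; the differential inequality then only has to deliver the BRNP and the dilation bound, which in one variable is Definition \ref{defBRNP}. Second, and more seriously, your proof of $(4)\Rightarrow(5)$ does not produce the constant $C$ uniformly in $v$: the one-dimensional Julia lemma gives $\limsup_{r\to1}|g_v(r)|/(1-r)=|\beta_v|$ for each fixed $v$, while (4) only bounds $\beta_v$ from \emph{above} by $\beta$. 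Example \ref{esempio} of this very paper has $\beta_v=1-1/\al^2\to-\infty$ as $\al\to 0$, so $|\beta_v|$ is unbounded over $v\in\mathcal L_{e_1}$ even though (1)--(4) hold with $\beta=0$; the implication as literally transcribed cannot be established your way (condition (5) has to be read one-sidedly, as in \cite{BCD}, for the equivalence to survive), and your argument silently assumes the uniformity.
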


A sufficient condition for the existence of BRNP, which we will
use in the sequel, is contained in the following (see \cite{ERS}):

\begin{theorem}\label{semign}
Let $G:\B^n\to \C^n$ be an infinitesimal generator. Assume that
\begin{enumerate}
  \item $\lim_{(0,1)\ni r\to 1}G(re_1)=0$,
  \item $\liminf_{r\to 1}\Re \frac{\la G(re_1),e_1\ra}{r-1}<+\infty$.
\end{enumerate}
Then
\[
\lim_{(0,1)\ni r\to 1}\frac{\la G(re_1),e_1\ra}{r-1}=\beta\in \R
\]
and  $e_1$ is a BRNP for $G$ with  dilation $\beta$.
\end{theorem}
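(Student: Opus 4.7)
My plan is to reduce the statement to the unit disc via slice reduction in the direction $e_1$, to apply the classical one-dimensional Julia lemma for infinitesimal generators, and to lift the result back to $\B^n$ using the characterizations of Theorem \ref{one-gv}. First, take $v=e_1$ (so $\alpha=1$ and $v''=0$) in the slice-reduction machinery of Section \ref{tre}: the complex geodesic becomes $\v_{e_1}(\zeta)=\zeta e_1$ and the explicit formula \eqref{explicit} collapses to
\[
g_{e_1}(\zeta)=G_1(\zeta e_1)=\la G(\zeta e_1),e_1\ra.
\]
By Theorem \ref{one-gv}(4), $g_{e_1}$ is an infinitesimal generator on $\D$, and hypotheses (1) and (2) become the one-dimensional conditions $\lim_{r\to 1^-}g_{e_1}(r)=0$ and $\liminf_{r\to 1^-}\Re g_{e_1}(r)/(r-1)<+\infty$.

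Second, apply the classical one-dimensional Julia lemma for infinitesimal generators on the disc (see \cite{CDP}): under these two conditions, the non-tangential---in particular radial---limit
\[
\angle\lim_{\zeta\to 1}\frac{g_{e_1}(\zeta)}{\zeta-1}=\beta\in\R
\]
exists, and $1$ is a BRNP of $g_{e_1}$ with dilation $\beta$. This establishes $\lim_{r\to 1^-}\la G(re_1),e_1\ra/(r-1)=\beta$, which is the first half of the conclusion.

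Third, to see that $e_1$ is a BRNP of $G$ in the sense of Definition \ref{defBRNP} with dilation $\beta$, I would invoke Theorem \ref{one-gv}(5): the BRNP property for $G$ is equivalent to a uniform bound $\limsup_{r\to 1^-}|g_v(r)|/(1-r)\leq C$ over all $v\in\mathcal L_{e_1}$, and in that case the dilation equals $\sup_v\beta_v$. The full strength of hypothesis (1)---that the entire vector $G(re_1)$, not merely its $e_1$-component, tends to zero---combined with (2), with the representation
\[
g_v(r)=\frac{r}{\alpha^2}G_1(\v_v(r))-\frac{r-1}{\alpha}\la G(\v_v(r)),v\ra
\]
coming from \eqref{explicit}, and with a Lindel\"of-type argument applied to the holomorphic maps $\zeta\mapsto G(\v_v(\zeta))$, should produce the needed uniform bound and give $\beta_v\leq\beta$ for each $v$; since $\beta_{e_1}=\beta$ from Step 2, this yields $\beta_0=\sup_v\beta_v=\beta$.

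I expect Step 3 to be the principal obstacle. The hypotheses control $G$ only along the radial ray $re_1$, and transferring that control to arbitrary complex slices through $e_1$ requires careful use of the interior holomorphy of $G$ together with the infinitesimal-generator property recorded in Theorem \ref{char}. The strengthened hypothesis (1)---vanishing of the full vector $G(re_1)$, and not just of $\la G(re_1),e_1\ra$---is precisely what is needed to make such a transfer go through without any additional regularity assumption on $G$ at $e_1$.
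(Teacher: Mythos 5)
The paper itself does not prove this statement: it is quoted verbatim from \cite{ERS} (``A sufficient condition \dots is contained in the following (see \cite{ERS})''), so there is no internal proof to compare against and your attempt must stand on its own. Your Steps 1--2 are sound: $g_{e_1}(\zeta)=\la G(\zeta e_1),e_1\ra$ is the slice reduction along $e_1$, it is an infinitesimal generator of $\D$ (though not by Theorem \ref{one-gv}(4), whose items all presuppose the BRNP you are trying to establish --- the correct reference is the fact, recalled at the start of Section \ref{tre} from \cite{BCD}, that slice reductions of generators are generators), and the one-dimensional result of \cite{CDP} then yields $\lim_{r\to 1}\la G(re_1),e_1\ra/(r-1)=\beta\in\R$ and the BRNP at $1$ for $g_{e_1}$.

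Step 3, however, is a genuine gap, and it is where the entire content of the theorem lives. Your hypotheses control $G$ only along the ray $r\mapsto re_1$, i.e.\ only the single slice $v=e_1$; the conclusion you need from Theorem \ref{one-gv}(5) is a bound on $|g_v(r)|/(1-r)$ that is uniform over \emph{all} $v\in\mathcal L_{e_1}$, and these slices pass through points $\v_v(r)$ that approach $e_1$ along genuinely different directions. No ``Lindel\"of-type argument'' transfers a radial estimate on one complex line to estimates on all the others: $G$ is merely holomorphic (not even bounded), and knowing $G(re_1)\to 0$ says nothing quantitative about $G(\v_v(r))$ for $v\neq e_1$. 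The missing ingredient is the two-point flow-invariance (hyperbolic monotonicity) characterization of infinitesimal generators on $\B^n$ from \cite{RS} and \cite{ARS}: an inequality of the form
\[
\Re\Bigl[\frac{\la G(z),z\ra}{1-\|z\|^2}-\frac{\la G(z),w\ra}{1-\la z,w\ra}\Bigr]
+\Re\Bigl[\frac{\la G(w),w\ra}{1-\|w\|^2}-\frac{\la G(w),z\ra}{1-\la w,z\ra}\Bigr]\leq 0
\]
valid for all $z,w\in\B^n$. Taking $w=re_1$ and letting $r\to 1$, hypothesis (1) kills the term $\la G(re_1),z\ra/(1-\la re_1,z\ra)$ and Step 2 gives $\la G(re_1),re_1\ra/(1-r^2)\to-\beta/2$, so one obtains condition (3) of Theorem \ref{one-gv} with constant $\beta$ for every $z\in\B^n$ in a single stroke, with no need for per-slice uniform bounds. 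Without this (or an equivalent global inequality), your argument proves only that $1$ is a BRNP for the one slice reduction $g_{e_1}$, which does not imply that $e_1$ is a BRNP for $G$ in the sense of Definition \ref{defBRNP} --- compare Example \ref{esempio}, where the slice dilations $\beta_v$ genuinely vary with $v$.
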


Slice reductions of holomorphic vector fields preserve
pluricomplex Green and Poisson functions of strongly convex
domains, as shown in \cite{BCD} (see also \cite{Ca} for somewhat
explicit computations). As a consequence, a holomorphic vector
field is an infinitesimal generator if and only if all its slice
reductions (with respect to all points $\tau\in\de \B^n$) are
infinitesimal generators in the unit disc. In what follows, we
need only a boundary version of this fact, which we prove here
explicitly for the unit ball. We start with the following:

\begin{lemma}\label{equal}
Let $G:\B^n\to \C^n$ be holomorphic. Then, for all $v=(\al,v'')\in \mathcal L_{e_1}$, and for all $\zeta\in \D$
\begin{enumerate}
                  \item $\displaystyle{\frac{\Re \la G(\v_v(\zeta)),\v_v(\zeta)\ra}{1-\|\v_v(\zeta)\|^2}- \Re \frac{\la G(\v_v(\zeta)),e_1\ra }{1-\la\v_v(\zeta),e_1\ra}=
\frac{\Re \la g_v(\zeta),\zeta\ra}{1-|\zeta|^2}- \Re \frac{g_v(\zeta) }{1-\zeta}}$
                  \item for all $\delta\in \R$ it follows
                  \[
                  d(u_{\D})_{\zeta}\cdot g_v(\zeta)+\delta u_\D(\zeta)=\al^2[d(u_{\B^n})_{\v_v(\zeta)}\cdot G(\v_v(\zeta))+\delta u_{\B^n}(\v_v(\zeta))].
                  \]
                \end{enumerate}
\end{lemma}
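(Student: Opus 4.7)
The plan is to reduce both identities to a single algebraic fact and verify it by direct computation. For (2), by Remark \ref{plinio} we have $u_\D(\zeta) = \al^2 u_{\B^n}(\v_v(\zeta))$, which absorbs the $\delta$-terms, so it is enough to prove (2) for $\delta = 0$. Since $d(\v_v)_\zeta \cdot \xi = \al\xi v$ for $\xi \in \C$, differentiating $u_\D = \al^2 u_{\B^n}\circ\v_v$ via the chain rule gives
$$d(u_\D)_\zeta \cdot g_v(\zeta) = \al^2 d(u_{\B^n})_{\v_v(\zeta)}\cdot(\al g_v(\zeta) v).$$
Thus (2) with $\delta=0$ is equivalent to
$$d(u_{\B^n})_{\v_v(\zeta)}\cdot \bigl[G(\v_v(\zeta)) - \al g_v(\zeta) v\bigr] = 0, \qquad (\dagger)$$
which geometrically says that the ``$v$-component'' $d(\rho_v)_{\v_v(\zeta)}G(\v_v(\zeta)) = \al g_v(\zeta) v$ captures the full non-horospheric part of $G(\v_v(\zeta))$.

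To verify $(\dagger)$, I would compute $\partial u_{\B^n}$ explicitly. From $u_{\B^n}(z) = -(1-\|z\|^2)/|1-z_1|^2$ one finds
$$\partial u_{\B^n}(z)\cdot w = \frac{1}{|1-z_1|^2}\left[\la w,z\ra - \frac{(1-\|z\|^2)\, w_1}{1-z_1}\right].$$
Setting $z=\v_v(\zeta)$ and $w = G(\v_v(\zeta))-\al g_v(\zeta)v$, the explicit formula \eqref{explicit} for $g_v$ yields $w_1 = (1-\zeta)\bigl(G_1(\v_v(\zeta)) - \al\la G(\v_v(\zeta)),v\ra\bigr)$. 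Combined with the elementary identities $\la v,\v_v(\zeta)\ra = \al\bar\zeta$, $1-\|\v_v(\zeta)\|^2 = \al^2(1-|\zeta|^2)$, $1-z_1 = \al^2(1-\zeta)$, and the key cancellation
$$\la G(\v_v(\zeta)),\v_v(\zeta)\ra - G_1(\v_v(\zeta)) = -\al(1-\bar\zeta)\la G(\v_v(\zeta)),v\ra$$
(which follows immediately from the defining formula $\v_v(\zeta) = \al(\zeta-1)v + e_1$ and the conjugate-linearity of $\la\cdot,\cdot\ra$ in the second slot), the bracket collapses to $0$. Taking twice the real part proves $(\dagger)$.

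For (1), the same $\partial u_{\B^n}$ computation also yields the reformulation
$$d u_{\B^n}(z)\cdot G(z) = -2u_{\B^n}(z)\left[\frac{\Re\la G(z),z\ra}{1-\|z\|^2} - \Re\frac{\la G(z),e_1\ra}{1-z_1}\right]$$
and its one-dimensional analogue for $d u_\D(\zeta)\cdot g_v(\zeta)$. Substituting these into (2) with $\delta=0$, dividing by the nonzero factor $-2u_\D(\zeta) = -2\al^2 u_{\B^n}(\v_v(\zeta))$, and rewriting the radii via the identities above yields (1) at once. The main obstacle is the bookkeeping for $(\dagger)$; once the key cancellation identity above is spotted everything falls into place.
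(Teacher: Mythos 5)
Your proof is correct, but it runs in the opposite logical direction from the paper's. The paper proves (1) first, by setting $G_1=\la G(\v_v(\zeta)),e_1\ra$, $G_2=\la G(\v_v(\zeta)),v\ra$ and expanding the difference of the two sides into a long sum of terms whose vanishing is checked by collecting the coefficients of $\Re G_1$ and $\Re G_2$; it then deduces (2) from (1) using the explicit formula for $d(u_{\B^n})_z\cdot G(z)$ together with Remark \ref{plinio}. You instead prove (2) first -- and only for $\delta=0$, since Remark \ref{plinio} absorbs the $\delta$-terms -- by applying the chain rule to $u_\D=\al^2\,u_{\B^n}\circ\v_v$, which reduces the whole lemma to the single identity $d(u_{\B^n})_{\v_v(\zeta)}\cdot\bigl[G(\v_v(\zeta))-\al g_v(\zeta)v\bigr]=0$. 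I checked this identity: using $\la v,\v_v(\zeta)\ra=\al\overline{\zeta}$ and $\al^2 g_v(\zeta)=\zeta G_1-\al(\zeta-1)\la G(\v_v(\zeta)),v\ra$ from \eqref{explicit}, both $\la w,\v_v(\zeta)\ra$ and $(1-\|\v_v(\zeta)\|^2)w_1/(1-\la\v_v(\zeta),e_1\ra)$ equal $(1-|\zeta|^2)\bigl(G_1-\al\la G(\v_v(\zeta)),v\ra\bigr)$ for $w=G(\v_v(\zeta))-\al g_v(\zeta)v$, so the bracket in your $\partial u_{\B^n}$ formula does collapse to zero. You then recover (1) from (2) by dividing by the nonvanishing factor $-2u_\D(\zeta)$. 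Your route buys conceptual clarity and much less bookkeeping: everything rests on the transparent fact that $d(u_{\B^n})$ annihilates the difference between $G$ and its tangential projection $\al g_v v$ onto the slice, whereas the paper's verification of (1) is an opaque term-by-term cancellation. The only implicit ingredient you rely on is the chain rule for the real differential of the real-valued function $u_{\B^n}$ along the affine map $\v_v$, which you use correctly.
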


\begin{proof}
(1) We have $1-\|\v_v(\zeta)\|^2=\al^2(1-|\zeta|^2)$ and $1-\la\v_v(\zeta),e_1\ra=\al^2(1-\zeta)$. Write $G_1$ for $G_1(\v_v(\zeta))$ and $G_2$ for $\la G(\v_v(\zeta)),v\ra$. Then, taking into account that for all $a\in \C$ it holds $\Re (a\zeta)+\Re (a\overline{\zeta})=2\Re\zeta \Re a$, and expanding (1), we have
\begin{equation*}
\begin{split}
&\frac{1}{\al}\frac{\Re (G_2\overline{\zeta})}{1-|\zeta|^2}-\frac{1}{\al}\frac{\Re G_2}{1-|\zeta|^2}+\frac{1}{\al^2}\frac{\Re G_1}{1-|\zeta|^2}-\frac{1}{\al^2}\frac{\Re G_1}{|1-\zeta|^2}+\frac{1}{\al^2}\frac{\Re (G_1\overline{\zeta})}{|1-\zeta|^2}-  \frac{|\zeta|^2}{\al^2}\frac{\Re G_1}{1-|\zeta|^2}\\+&\frac{|\zeta|^2}{\al}\frac{\Re G_2}{1-|\zeta|^2}-\frac{1}{\al}\frac{\Re (G_2\overline{\zeta})}{1-|\zeta|^2}+\frac{1}{\al^2}\frac{\Re(G_1\zeta)}{|1-\zeta|^2}-\frac{|\zeta|^2}{\al^2}\frac{\Re G_1}{|1-\zeta|^2}-\frac{1}{\al}\frac{\Re(G_2\zeta)}{|1-\zeta|^2}+\frac{|\zeta|^2}{\al}\frac{\Re G_2}{|1-\zeta|^2}\\+&\frac{1}{\al}\frac{\Re G_2}{|1-\zeta|^2}-\frac{1}{\al}\frac{\Re (G_2\overline{\zeta})}{|1-\zeta|^2}
= \left( \frac{1-|\zeta|^2}{\al^2(1-|\zeta|^2)}+\frac{-1+2\Re\zeta-|\zeta|^2}{\al^2|1-\zeta|^2} \right) \Re G_1\\+&\left(\frac{|\zeta|^2-1}{\al(1-|\zeta|^2)}+\frac{|\zeta|^2-2\Re\zeta+1}{\al|1-\zeta|^2}\right)\Re G_2=0,
\end{split}
\end{equation*}
as we wanted.

(2) A direct computation shows that
\[
d(u_{\B^n})_z\cdot G(z)=-2 \Re \left(\frac{\la G(z),e_1\ra}{1-z_1}\right) \frac{1-\|z\|^2}{|1-z_1|^2}+2\Re \frac{\la G(z),z\ra}{|1-z_1|^2}.
\]
Hence, the result follows from (1)  taking into account Remark
\ref{plinio}. Also, see \cite[Eq. (4.7) p.45]{BCD}, where such a
formula has been proved for strongly convex domains.
\end{proof}

Also, we need the following lemma which will be useful to move from BRNP with dilation $>0$ to BRNP with dilation $\leq 0$:
\begin{lemma}\label{H-hyp}
Let $\beta\in \R$. Define $H_\beta:\B^n\to \C^n$ by
\begin{equation}\label{hhh}
H_\beta(z)=\frac{\beta}{2} (e_1-z_1z).
\end{equation}
Then $H_\beta$ generates a group of (hyperbolic) automorphisms of $\B^n$, with BRNP at $e_1$ with dilation $-\beta$ and
\begin{equation}\label{oroH}
d(u_{\B^n})_z\cdot H_\beta(z)-\beta u_{\B^n}(z)\equiv 0\quad \forall z\in \B^n.
\end{equation}
 Moreover, for all $v\in \mathcal L_{e_1}$ the slice reduction is
\[
 h_v(\zeta)=\frac{\beta}{2}(1-\zeta^2)=-\beta(\zeta-1)-\frac{\beta}{2}(\zeta-1)^2.
\]
\end{lemma}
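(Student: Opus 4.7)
The plan is to verify the four assertions by direct computation, relying on the formula
$$d(u_{\B^n})_z\cdot G(z)=-2\Re\!\left(\frac{\la G(z),e_1\ra}{1-z_1}\right)\!\frac{1-\|z\|^2}{|1-z_1|^2}+2\Re\frac{\la G(z),z\ra}{|1-z_1|^2}$$
recorded inside the proof of Lemma~\ref{equal}, on formula \eqref{explicit} for the slice reduction, and on Theorem~\ref{char}.

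The cornerstone is the horosphere identity \eqref{oroH}. For $G=H_\beta$ one has $\la H_\beta(z),e_1\ra=\frac{\beta}{2}(1-z_1^2)$ and $\la H_\beta(z),z\ra=\frac{\beta}{2}(\overline{z_1}-z_1\|z\|^2)$, so
$$\frac{\la H_\beta(z),e_1\ra}{1-z_1}=\frac{\beta}{2}(1+z_1),\qquad \Re\la H_\beta(z),z\ra=\frac{\beta}{2}(\Re z_1)(1-\|z\|^2).$$
Substituting, the two contributions proportional to $\Re z_1$ cancel and we obtain $d(u_{\B^n})_z\cdot H_\beta(z)=\beta u_{\B^n}(z)$, which is \eqref{oroH}. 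Since the ratio $(du_{\B^n})_z\cdot H_\beta(z)/u_{\B^n}(z)$ equals the constant $\beta$, Definition~\ref{defBRNP} immediately yields the BRNP property at $e_1$ and the dilation $-\beta$.

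For the group-of-automorphisms statement, Theorem~\ref{char} combined with \eqref{oroH} shows that $H_\beta$ is an infinitesimal generator whenever $\beta\geq 0$ and that $-H_\beta=H_{-\beta}$ is one whenever $\beta\leq 0$. To cover both signs of the parameter simultaneously, I would integrate the Cauchy problem \eqref{Cauchy} explicitly: the scalar equation $\dot x_1=\tfrac{\beta}{2}(1-x_1^2)$ is solved by a M\"obius transformation in $t$, and then the coordinates $x_j$ ($j\geq 2$) satisfy the linear equation $\dot x_j=-\tfrac{\beta}{2}x_1 x_j$, whose solution is a constant multiple of $\sqrt{1-x_1^2}$. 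A short computation yields
$$1-\|\phi_t(z)\|^2=\frac{4e^{\beta t}(1-\|z\|^2)}{|(1+z_1)e^{\beta t}+(1-z_1)|^2}>0,$$
valid for every $t\in\R$ and every $z\in\B^n$; hence $\phi_t$ is a holomorphic self-map of $\B^n$ with inverse $\phi_{-t}$, and $(\phi_t)_{t\in\R}$ is a group of automorphisms. Hyperbolicity for $\beta\neq 0$ is then immediate from the dilation $e^{-\beta}\neq 1$ at $e_1$.

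Finally, for the slice reduction I set $w:=\v_v(\zeta)$. Then $w_1=1-\alpha^2+\alpha^2\zeta$ and $\la w,v\ra=\overline{\alpha}+\alpha(\zeta-1)=\alpha\zeta$, so that $G_1(w)=\tfrac{\beta}{2}(1-w_1^2)$ and $\la H_\beta(w),v\ra=\tfrac{\beta\alpha}{2}(1-w_1\zeta)$. The factorizations $1-w_1^2=\alpha^2(1-\zeta)\bigl(2-\alpha^2(1-\zeta)\bigr)$ and $1-w_1\zeta=(1-\zeta)(1+\alpha^2\zeta)$, substituted into \eqref{explicit}, reduce $h_v(\zeta)$ to
$$\tfrac{\beta}{2}(1-\zeta)\bigl[\zeta(2-\alpha^2(1-\zeta))+(1-\zeta)(1+\alpha^2\zeta)\bigr];$$
the $\alpha^2$-terms inside the bracket cancel, leaving $1+\zeta$, whence $h_v(\zeta)=\tfrac{\beta}{2}(1-\zeta^2)$. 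The second form follows by writing $1-\zeta^2=-(\zeta-1)\bigl((\zeta-1)+2\bigr)$. The mildly delicate part of the proof is bookkeeping the two cancellations (the $\Re z_1$ terms in the horosphere identity and the $\alpha^2$ terms in the slice-reduction bracket); everything else is routine algebra.
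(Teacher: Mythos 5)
Your proof is correct, but it is organized quite differently from the paper's. The paper disposes of the lemma in three lines: it quotes \cite{BCDl} for the ``well known'' fact that $H_\beta$ generates a group of hyperbolic automorphisms with BRNP at $e_1$ and dilation $-\beta$, then obtains the identity \eqref{oroH} by applying Theorem \ref{one-gv}.(2) to $H_\beta$ (giving $d(u_{\B^n})_z\cdot H_\beta(z)-\beta u_{\B^n}(z)\le 0$) and to $-H_\beta$ (giving the reverse inequality), and declares the slice reduction a direct computation. You run the logic in the opposite direction: you first establish \eqref{oroH} by direct computation from the explicit formula for $d(u_{\B^n})_z\cdot G(z)$ recorded in the proof of Lemma \ref{equal}, read off the BRNP and the dilation $-\beta$ from Definition \ref{defBRNP}, and --- having correctly observed that Theorem \ref{char} alone certifies only one of $H_\beta$, $-H_\beta$ as a generator when $\beta\ne 0$ --- integrate the Cauchy problem explicitly to exhibit the flow and verify $1-\|\phi_t(z)\|^2>0$ for all $t\in\R$, whence each $\phi_t$ is an automorphism with inverse $\phi_{-t}$. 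All of your computations check out (I verified the cancellation of the $\Re z_1$ terms in the horosphere identity, the formula for $1-\|\phi_t(z)\|^2$, and the two factorizations $1-w_1^2=\al^2(1-\zeta)(2-\al^2(1-\zeta))$ and $1-w_1\zeta=(1-\zeta)(1+\al^2\zeta)$ in the slice-reduction bracket). What your route buys is self-containedness: the lemma no longer leans on an external reference, and the explicit flow formula is reusable elsewhere. What it costs is length, plus one presentational quibble: Definition \ref{defBRNP} presupposes that $G$ is already an infinitesimal generator, so strictly speaking the BRNP/dilation conclusion should come after (or be conditioned on) the paragraph in which you construct the group; since that paragraph does appear, this is an ordering issue, not a gap.
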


\begin{proof}
It is well known that  $H_\beta$ is a generator of a group of (hyperbolic) automorphisms (see, {\sl e.g.} \cite{BCDl}) with BRNP at $e_1$ and dilation $-\beta$. Hence $-H_\beta$ is a generator of a group of automorphisms having BRNP at $e_1$ with dilation $\beta$. Applying Theorem \ref{one-gv}.(2) at both $H_\beta$ and $-H_\beta$ we get \eqref{oroH}.

The form of the slice reductions is a direct computation from the very definition.
\end{proof}

In the paper we will use several times the following trick, whose proof is immediate from Theorem \ref{one-gv} and Lemma \ref{H-hyp}, which we state here for the reader convenience:

\begin{corollary}\label{trick}
Let $G:\B^n\to \C^n$ be an infinitesimal generator and assume $e_1$ is a BRNP for $G$, with dilation $\delta$. Let $\beta\in \R$ and let $H_\beta$ be given by \eqref{hhh}. Then $G+H_\beta$ is an infinitesimal generator in $\B^n$ with $e_1$ as BRNP and dilation $\delta-\beta$.
\end{corollary}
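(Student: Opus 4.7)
The strategy is to translate the BRNP condition into the pluricomplex Poisson kernel inequality of Theorem~\ref{one-gv}.(2), add the inequality for $G$ to the identity for $H_\beta$, and translate back.

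First, since $e_1$ is a BRNP of $G$ with dilation $\delta$, the implication $(1)\Rightarrow(2)$ in Theorem~\ref{one-gv} yields
$$d(u_{\B^n})_z\cdot G(z)+\delta\,u_{\B^n}(z)\le 0,\qquad z\in\B^n.$$
On the other hand, Lemma~\ref{H-hyp} (identity \eqref{oroH}) gives
$$d(u_{\B^n})_z\cdot H_\beta(z)-\beta\,u_{\B^n}(z)\equiv 0.$$
Summing these two relations, using linearity of the differential $d(u_{\B^n})_z$, I would obtain
$$d(u_{\B^n})_z\cdot (G+H_\beta)(z)+(\delta-\beta)\,u_{\B^n}(z)\le 0,\qquad z\in\B^n.$$

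The next step is to invoke the converse direction $(2)\Rightarrow(1)$ in Theorem~\ref{one-gv}: this Poisson kernel inequality is both a sufficient condition for the vector field $G+H_\beta$ to be an infinitesimal generator on $\B^n$, and a witness that $e_1$ is a BRNP for it with dilation at most $\delta-\beta$. (If one prefers to avoid appealing to Theorem~\ref{one-gv} for a vector field not yet known to be a generator, the same conclusion follows slice-by-slice via Theorem~\ref{one-gv}.(4), applied to the slice reductions $g_v+h_v$ of $G+H_\beta$, whose generator property on $\D$ can be read off from the Berkson--Porta representation \eqref{Berkson-Porta}.)

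Finally, to upgrade ``dilation $\le\delta-\beta$'' to equality, I would apply the same argument in reverse: writing $G=(G+H_\beta)+H_{-\beta}$ and noting that $H_{-\beta}$ is of the form \eqref{hhh} with parameter $-\beta$, the identical reasoning gives that the dilation of $G$ is bounded above by the dilation of $G+H_\beta$ plus $\beta$. Since the dilation of $G$ is exactly $\delta$, this forces the dilation of $G+H_\beta$ to be at least $\delta-\beta$, and combining with the previous bound yields equality. The only delicate point in the argument is the ``converse direction'' step above; once one accepts that the Poisson inequality characterizes BRNP generators (not merely BRNP's of a given generator), the rest is pure linearity and a symmetry trick.
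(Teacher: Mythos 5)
Your main line of argument --- adding the Julia-type inequality $d(u_{\B^n})_z\cdot G(z)+\delta u_{\B^n}(z)\le 0$ from Theorem \ref{one-gv}.(2) to the identity \eqref{oroH} for $H_\beta$, and then pinning down the exact dilation by the symmetric decomposition $G=(G+H_\beta)+H_{-\beta}$ --- is exactly the computation the paper has in mind when it calls the proof ``immediate from Theorem \ref{one-gv} and Lemma \ref{H-hyp}''. That part is fine.

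The gap is at the point you yourself flag as delicate: you never validly establish that $G+H_\beta$ \emph{is} an infinitesimal generator, which you must do before invoking the implication $(2)\Rightarrow(1)$ of Theorem \ref{one-gv}, since that theorem is stated only for vector fields already known to be generators. Both of your proposed workarounds fail under the hypotheses of the corollary. First, the ``Poisson inequality characterizes BRNP generators'' statement is Proposition \ref{cbDW} (via Theorem \ref{char}), and both require $G$ to be $C^1$ at $e_1$; the corollary assumes no boundary regularity whatsoever. Second, the slice-by-slice route through Theorem \ref{one-gv}.(4) is circular for the same reason: condition (4) is equivalent to (1) only for a vector field already assumed to be a generator, and the slices $g_v$ with $v\in\mathcal L_{e_1}$ alone do not detect the generator property without extra regularity (one would need all slice reductions through all boundary points, which is a different and longer argument). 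The missing ingredient --- which the paper itself uses in the proof of Proposition \ref{cbDW} --- is simply that infinitesimal generators on $\B^n$ form a convex cone closed under addition (\cite[Corollary 2.5.29]{Abate}); since $H_\beta$ generates a group of automorphisms by Lemma \ref{H-hyp}, it is itself a generator, hence so is $G+H_\beta$. With that one observation inserted, the rest of your argument goes through.
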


Now we can prove a boundary characterization of infinitesimal generators at BRNP:

\begin{proposition}\label{cbDW}
Let $G:\B^n\to \C^n$ be  holomorphic and $C^1$ at $e_1$. Then the following are equivalent:
\begin{enumerate}
\item $G$ is an infinitesimal generator with BRNP at $e_1$ and dilation $\leq\beta \in \R$,
\item for each $v\in \mathcal L_{e_1}$ the slice reduction $g_v$ is an infinitesimal generator of the unit disc with BRNP at $1$ and dilation $\leq \beta$.
\item $d(u_{\B^n})_z\cdot G(z)+\beta u_{\B^n}(z)\leq 0$ for all $z\in \B^n$.
\end{enumerate}
\end{proposition}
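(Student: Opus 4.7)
The proof will close the cycle $(1)\Rightarrow(2)\Rightarrow(3)\Rightarrow(1)$. The implications $(1)\Rightarrow(2)$ and $(1)\Rightarrow(3)$ come for free from Theorem \ref{one-gv}: once $G$ is known to be an infinitesimal generator, the equivalences $(1)\Leftrightarrow(4)$ and $(1)\Leftrightarrow(2)$ in that theorem give precisely what we need. So the real content lies in the two reverse implications, where one must recover the generator property of $G$ from pointwise conditions.

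For $(2)\Rightarrow(3)$, fix $v=(\al,v'')\in\mathcal L_{e_1}$. By hypothesis $g_v$ is an infinitesimal generator of $\D$ with BRNP at $1$ and dilation $\leq\beta$, so the one-dimensional case of Theorem \ref{one-gv} (equivalence $(1)\Leftrightarrow(2)$) yields
\[
d(u_\D)_\zeta\cdot g_v(\zeta)+\beta u_\D(\zeta)\leq 0\quad\forall\zeta\in\D.
\]
Lemma \ref{equal}(2) rewrites the left-hand side as $\al^2[d(u_{\B^n})_{\v_v(\zeta)}\cdot G(\v_v(\zeta))+\beta u_{\B^n}(\v_v(\zeta))]$, and since $\al^2>0$ the ball inequality holds at each point $\v_v(\zeta)$. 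It remains to note that every $z\in\B^n$ lies on a complex geodesic ending at $e_1$, and every such geodesic, up to reparametrization, is of the form $\v_v(\D)$ for some $v\in\mathcal L_{e_1}$. Hence the points $\v_v(\zeta)$ exhaust $\B^n$ and (3) follows.

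For $(3)\Rightarrow(1)$, introduce $\tilde G:=G+H_\beta$ with $H_\beta$ as in Lemma \ref{H-hyp}. Combining (3) with the identity \eqref{oroH} gives
\[
d(u_{\B^n})_z\cdot \tilde G(z)=d(u_{\B^n})_z\cdot G(z)+\beta u_{\B^n}(z)\leq 0\quad\forall z\in\B^n.
\]
Since $G$ is $C^1$ at $e_1$ by hypothesis and $H_\beta$ is smooth on $\overline{\B^n}$, so is $\tilde G$, and Theorem \ref{char} then implies that $\tilde G$ is an infinitesimal generator. Because $-H_\beta=H_{-\beta}$ generates a group of automorphisms (Lemma \ref{H-hyp}) and the set of infinitesimal generators is a convex cone closed under addition, we conclude that $G=\tilde G+H_{-\beta}$ is itself an infinitesimal generator. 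Feeding (3) back into the implication $(2)\Rightarrow(1)$ of Theorem \ref{one-gv} then gives that $e_1$ is a BRNP of $G$ with dilation $\leq\beta$.

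The only real obstacle is the very last step: if $\beta>0$, the inequality in (3) does not directly yield $d(u_{\B^n})_z\cdot G(z)\leq 0$, so Theorem \ref{char} cannot be applied to $G$ itself. The point of introducing $H_\beta$ is exactly that \eqref{oroH} lets it absorb the offending $\beta u_{\B^n}$ term, reducing matters to Theorem \ref{char} applied to $\tilde G$; the cone structure of the set of generators then brings us back to $G$. Once $G$ is known to be a generator, everything else reduces to Theorem \ref{one-gv}.
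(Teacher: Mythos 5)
Your proof is correct and follows essentially the same route as the paper: both reduce everything to showing $G$ is an infinitesimal generator via the auxiliary field $G+H_\beta$, the identity \eqref{oroH}, Theorem \ref{char}, and the cone property of generators, with Theorem \ref{one-gv} and Lemma \ref{equal} supplying the remaining equivalences. The only cosmetic difference is that you arrange the implications as a cycle $(1)\Rightarrow(2)\Rightarrow(3)\Rightarrow(1)$, whereas the paper derives the key inequality for $G+H_\beta$ from (2) and from (3) separately; the substance is identical.
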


\begin{proof}
(1) implies (2) and (3) by Theorem \ref{one-gv}.

If either (2) or (3) holds, the only aim is to show that $G$ is an
infinitesimal generator, because then (1) follows from Theorem
\ref{one-gv}.

Let $F:=G+H_\beta$, where $H_\beta$ is given by \eqref{hhh}.

Assume (2) holds.  By Theorem \ref{one-gv} it follows that $d(u_\D)_\zeta\cdot g_v(\zeta)+\beta u_\D(\zeta)\leq 0$ for all $\zeta\in\D$ and $v\in \mathcal L_{e_1}$. Hence, by Lemma \ref{equal} and \eqref{oroH}  it is easy to see that $d(u_{\B^n})_z\cdot F(z)\leq 0$ for all $z\in \B^n$. The same conclusion is obtained  directly if (3) holds. By Theorem \ref{char} it follows that $F$ is an infinitesimal generator, and so does $G=F-H_\beta$, because infinitesimal generators in the ball form a cone (see \cite[Corollary  2.5.29]{Abate}).
\end{proof}

Finally, we have the following characterization of generators of groups which we will use later.

\begin{proposition}\label{gruppo}
Let $G:\B^n\to \C^n$ be holomorphic and $C^1$ at $e_1$. Let $\beta\in \R$. The following are equivalent:
\begin{enumerate}
  \item $G$ generates a group of automorphisms of $\B^n$ with BRNP $e_1$ and dilation $\beta$,
  \item $d(u_{\B^n})_z\cdot G(z)+\beta u_{\B^n}(z)\equiv 0$,
  \item for each $v\in \mathcal L_{e_1}$ the infinitesimal generator $g_v$ generates a group of automorphisms of $\D$ with BRNP $1$ and dilation $\beta$,
  \item for each $v\in \mathcal L_{e_1}$ it holds $d(u_{\D})_\zeta\cdot g_v(\zeta)+\beta u_{\D}(\zeta)\equiv 0$.
\end{enumerate}
Moreover, $G\equiv 0$ (hence the group it generates is the trivial group of automorphisms $\phi_t(z)\equiv z$ for all $t\geq 0$) if and only if for each $v\in \mathcal L_{e_1}$ it follows $g_v\equiv 0$. If this is the case then $\beta=0$.
\end{proposition}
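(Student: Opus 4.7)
The plan is to establish $(1)\Leftrightarrow(2)$ and $(3)\Leftrightarrow(4)$ by applying Proposition \ref{cbDW} (in dimensions $n$ and $1$, respectively) to both the vector field and its negative; to deduce $(2)\Leftrightarrow(4)$ from Lemma \ref{equal}; and finally to extract $G\equiv 0$ from the hypothesis $g_v\equiv 0$ for all $v$ by reducing to a single scalar identity and exploiting its mixed holomorphic/antiholomorphic structure.

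For $(1)\Leftrightarrow(2)$, I would use that $G$ generates a group of automorphisms precisely when both $G$ and $-G$ are infinitesimal generators, and that reversing the flow sends the dilation $\beta$ to $-\beta$. Applying Proposition \ref{cbDW} separately to $G$ (dilation $\le\beta$) and to $-G$ (dilation $\le-\beta$) yields simultaneously $(du_{\B^n})_z\cdot G+\beta u_{\B^n}\le 0$ and $\ge 0$, giving the equality in (2); conversely, (2) implies both inequalities, so by Proposition \ref{cbDW} both $G$ and $-G$ are generators, and the identity forces the dilations to equal $\beta$ and $-\beta$ exactly (since $(du_{\B^n})_zG/u_{\B^n}\equiv-\beta$ is constant, and the dilation is defined as $-\inf(du_{\B^n})G/u_{\B^n}$). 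For $(2)\Leftrightarrow(4)$, Lemma \ref{equal}(2) with $\delta=\beta$ identifies, up to the positive factor $\al^2$, the quantity in (4) at $\zeta\in\D$ with the quantity in (2) at $z=\v_v(\zeta)$; since every $z\in\B^n$ lies on a complex geodesic through $e_1$ (take $v_z=\alpha_z(z-e_1)/(z_1-1)$ with $\alpha_z=|z_1-1|/\|z-e_1\|$), the two statements are equivalent.

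For $(3)\Leftrightarrow(4)$, I apply the one-dimensional case of Proposition \ref{cbDW} (in which slice reductions are trivial) to both $g_v$ and $-g_v$. The implication $G\equiv 0\Rightarrow g_v\equiv 0$ in the last claim is immediate from the defining formula for $g_v$. Conversely, assume $g_v\equiv 0$ for every $v\in\mathcal L_{e_1}$. Then (4) forces $\beta u_\D\equiv 0$, hence $\beta=0$. To show $G\equiv 0$, I use the explicit formula \eqref{explicit}: setting $z=\v_v(\zeta)$ and using $\al\zeta=\la z,v\ra$ together with $\al^2(\zeta-1)=z_1-1$, the vanishing of $g_v(\zeta)$ becomes $\la G_1(z)\,z-(z_1-1)G(z),\,v\ra=0$. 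Since $v=v_z$ is a positive-real multiple of $(z-e_1)/(z_1-1)$, this is equivalent to $\la G_1(z)z-(z_1-1)G(z),\,z-e_1\ra=0$; expanding the Hermitian inner product and using $\la G,e_1\ra=G_1$, $\la z,e_1\ra=z_1$, $\la z,z\ra=\|z\|^2$ produces the scalar identity
\[
(1-z_1)\,\la G(z),z\ra\;=\;G_1(z)\,(1-\|z\|^2),\qquad z\in\B^n.
\]

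The crux of the argument, and the step I expect to be the main obstacle, is extracting $G\equiv 0$ from this single real-analytic identity. The idea is that the two sides mix holomorphic and antiholomorphic dependence on $z$. Applying $\de/\de\bar z_1$ to both sides, and noting that $G_j$ and $1-z_1$ are holomorphic while $\frac{\de}{\de\bar z_1}\la G,z\ra=G_1$ and $\frac{\de}{\de\bar z_1}(1-\|z\|^2)=-z_1$, yields $(1-z_1)G_1=-z_1\,G_1$, whence $G_1\equiv 0$. The identity then reduces to $\la G(z),z\ra\equiv 0$ on $\B^n$; applying $\de/\de\bar z_j$ for $j=2,\dots,n$ gives $G_j\equiv 0$, and hence $G\equiv 0$.
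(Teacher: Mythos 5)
Your proof is correct and follows essentially the same route as the paper: $(1)\Leftrightarrow(2)$ and $(3)\Leftrightarrow(4)$ by applying Theorem \ref{one-gv}/Proposition \ref{cbDW} to $\pm G$ (resp.\ $\pm g_v$), $(2)\Leftrightarrow(4)$ via Lemma \ref{equal}, and the final claim via the explicit formula \eqref{explicit}. The only difference is that where the paper dismisses the last step with ``it is easy to see,'' you actually carry it out, reducing $g_{v}\equiv 0$ to the scalar identity $(1-z_1)\la G(z),z\ra=G_1(z)(1-\|z\|^2)$ and killing it with $\de/\de\bar z_j$ derivatives --- a correct and welcome completion of that remark.
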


\begin{proof} (2) is equivalent to (4) by Lemma \ref{equal}.

If (1) holds then $-G$ is an infinitesimal generator on $\B^n$ with BRNP at $e_1$ and dilation $-\beta$. Hence Theorem \ref{one-gv} applied to $G$ and $-G$ implies (2).
If (2) holds, then (1) follows from Proposition \ref{cbDW} applied to $G$ and $-G$.
Similarly,  (3) is equivalent to (4).

Finally, by \eqref{explicit} it is easy to see that  $G\equiv 0$  if and only if $g_v\equiv 0$ for all $v\in \mathcal L_{e_1}$.
\end{proof}

\section{The Julia-Wolff-Carath\'eodory Theorem for infinitesimal generators}\label{sectJWC}

As a matter of notation, we write $\angle \lim $ for non-tangential limits, $\angle_K\lim$ for restricted $K$-limits and $K-\lim$ for $K$-limits.

\begin{proposition}\label{JWC-1}
Let $G$ be an infinitesimal generator on $\B^n$. Suppose  $\lim_{(0,1)\ni r\to 1}G(re_1)=0$ and
\begin{equation}\label{ipo1}
\B^n\ni z\mapsto\frac{|\la G(z),e_1\ra|}{|z_1-1|}\quad \hbox{is  bounded in any Kor\'anyi region.}
\end{equation}
Then $e_1$ is a BRNP for $G$. Moreover, if $\beta\in \R$ is the dilation of $G$ at $e_1$, then
\begin{itemize}
\item[(1$^{'}$)]  $\B^n\ni z\mapsto \la dG_z(e_1),e_1\ra$ is bounded in any Kor\'anyi region,
\item[(1$^{''}$)] $\B^n\ni z\mapsto \frac{\la dG_z(e_j),e_1\ra}{|z_1-1|^{1/2}}$ is bounded in any Kor\'anyi region for $j=2,\ldots, n$,
\item[(2)] $\angle_K\lim_{z\to e_1}\frac{\la G(z),e_1\ra}{z_1-1}=\beta$,
\item[(3)] $\angle_K\lim_{z\to e_1}\langle dG_z(e_1), e_1\rangle=\beta$.
\end{itemize}
\end{proposition}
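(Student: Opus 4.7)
The plan is (i) to establish the BRNP via Theorem~\ref{semign}; (ii) to extract radial limits of $G_1(\zeta e_1)/(\zeta-1)$ and of $\de G_1/\de z_1(\zeta e_1)$ by slicing at $v=e_1$; (iii) to prove the boundedness claims (1$'$) and (1$''$) by Cauchy's inequality on a Kor\'anyi polydisc; and finally (iv) to promote the radial convergence in (2) and (3) to restricted $K$-limits via a Lindel\"of-type argument.

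First I would handle (i): the ray $re_1$ lies in every Kor\'anyi region, so \eqref{ipo1} forces $|\la G(re_1),e_1\ra|/(1-r)$ to remain bounded as $r\to 1^-$; combined with $\lim_{r\to 1}G(re_1)=0$ this is exactly what Theorem~\ref{semign} requires, and we obtain that $e_1$ is a BRNP for $G$ with dilation $\beta=\lim_{r\to 1^-}\la G(re_1),e_1\ra/(r-1)\in\R$. For (ii), I would choose $v=e_1$ (so $\al=1$, $v''=0$ and $\v_{e_1}(\zeta)=\zeta e_1$): formula \eqref{explicit} collapses to $g_{e_1}(\zeta)=G_1(\zeta e_1)$, where $G_1:=\la G,e_1\ra$. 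By Theorem~\ref{one-gv}, $g_{e_1}$ is an infinitesimal generator of $\D$ with BRNP at $1$ whose dilation is the radial limit of $g_{e_1}(\zeta)/(\zeta-1)$, which by (i) equals $\beta$; the one-dimensional JWC content of Theorem~\ref{one-gv}(5) therefore provides
\[
\angle\lim_{\zeta\to 1}\frac{G_1(\zeta e_1)}{\zeta-1}=\beta,\qquad \angle\lim_{\zeta\to 1}\frac{\de G_1}{\de z_1}(\zeta e_1)=\beta,
\]
the radial forms of (2) and (3).

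For (iii), inside a Kor\'anyi region $K(e_1,R)$ the defining inequality yields $|z_1-1|\leq R(1-\|z\|)$, so for a suitable small $c=c(R)>0$ the Kor\'anyi polydisc
\[
\Delta(z):=\{w\in\C^n:|w_1-z_1|<c(1-\|z\|),\ |w_j-z_j|<c(1-\|z\|)^{1/2}\text{ for }j\geq 2\}
\]
is contained in $\B^n$ and in some slightly larger Kor\'anyi region. On $\Delta(z)$ hypothesis \eqref{ipo1} bounds $|G_1|$ by a constant multiple of $|z_1-1|$, so Cauchy's inequality yields $|\de G_1/\de z_1(z)|\leq C|z_1-1|/(1-\|z\|)$ and $|\de G_1/\de z_j(z)|\leq C|z_1-1|/(1-\|z\|)^{1/2}$ for $j\geq 2$. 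The bound $|z_1-1|\leq R(1-\|z\|)$ converts the first estimate into (1$'$), while rewriting the second as $C|z_1-1|^{1/2}\cdot(|z_1-1|/(1-\|z\|))^{1/2}$ gives (1$''$).

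For (iv), the holomorphic functions $z\mapsto G_1(z)/(z_1-1)-\beta$ and $z\mapsto \de G_1/\de z_1(z)-\beta$ are bounded on every Kor\'anyi region of vertex $e_1$ (by \eqref{ipo1} and by (1$'$) respectively) and vanish non-tangentially along the $e_1$-axis by (ii). A Lindel\"of-type principle in $\B^n$ then upgrades these radial limits to restricted $K$-limits, delivering (2) and (3). Concretely, one argues slice by slice: since $\v_v^{-1}(K(e_1,R))=K_\D(1,R)$ is a horocyclic region of $\D$, the classical one-dimensional Lindel\"of theorem applies along each complex geodesic, and the tangential-decay condition $\|z-\la z,e_1\ra e_1\|^2/(1-|\la z,e_1\ra|^2)\to 0$ defining the restricted $K$-approach controls the dependence on the slice direction $v$ as $z\to e_1$. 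This Lindel\"of step is where I expect the main technical obstacle to lie, because our boundedness is assumed only region by region rather than globally on $\B^n$, so the standard textbook formulations (which typically require global boundedness) must be carefully adapted to the present setting.
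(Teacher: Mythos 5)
Your proposal is correct and follows essentially the same route as the paper: Theorem~\ref{semign} gives the BRNP and the radial limit, Cauchy estimates on polydiscs adapted to Kor\'anyi regions (the paper invokes \cite[Lemma 8.5.5]{Ru} for exactly your containment claim) give (1$'$) and (1$''$), and the slice $g_{e_1}(\zeta)=\la G(\zeta e_1),e_1\ra$ together with a Lindel\"of-type upgrade gives (2) and (3). The ``main technical obstacle'' you anticipate in step (iv) is not one: the needed statement is precisely $\check{\hbox{C}}$irka's theorem \cite[Theorem 8.4.8]{Ru}, which assumes only boundedness in each Kor\'anyi region (not global boundedness) plus a radial limit, and yields the restricted $K$-limit directly, so no slice-by-slice adaptation is required.
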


\begin{proof}
By hypotheses of the theorem clearly guarantee that the hypotheses of Theorem \ref{semign} are  satisfied so that $e_1$ is a BRNP for $G$.

(1$^{'}$)  The proof is based on an
application of the Cauchy formula and it is similar to the one
given by Rudin for the case of holomorphic self-maps of the unit
ball (see \cite[p.180]{Ru}). For the sake of completeness, we
sketch it here.

Let $R\geq 1$ and let $K(e_1,R)=\{z\in \B^n: |1-z_1|\leq \frac{R}{2}(1-\|z\|^2)\}$ be a Kor\'anyi region. Let $R'>R$ and $\delta:=\frac{1}{3}(\frac{1}{R}-\frac{1}{R'})$. By \cite[Lemma 8.5.5]{Ru} if $z\in K(e_1,R)$ and $\lambda\in \C$ is such that $|\lambda|\leq \delta|1-z_1|$ and  $u''\in \C^{n-1}$ is such that $\|u''\|\leq \delta |1-z_1|^{1/2}$ then $(z_1+\lambda, z''+u'')\in K(e_1, R')$.

Now, fix $z\in K(e_1,R)$ and let $r=r(z):=\delta |1-z_1|$. By the Cauchy formula
\begin{equation*}
\begin{split}
\la dG_z(e_1), e_1\ra &=\frac{1}{2\pi i}\int_{|\zeta|=r} \frac{\la G(z_1+\zeta, z''), e_1\ra}{\zeta^2}d\zeta \\&=\frac{1}{2\pi}\int_0^{2\pi} \frac{\la G(z_1+re^{i\theta}, z''), e_1\ra}{z_1+re^{i\theta}-1}\left(1-\frac{1-z_1}{re^{i\theta}} \right)d\theta.
\end{split}
\end{equation*}
Now, by the choice of $r$, the points $(z_1+re^{i\theta},z'')\in
K(e_1, R')$, hence by \eqref{ipo1} there exists a constant $C>0$
(which depends only on $R, R'$) such that
\[
\frac{|\la
G(z_1+re^{i\theta}, z''), e_1\ra|}{|z_1+re^{i\theta}-1|}\leq C.
\]
Also, $|1-\frac{1-z_1}{re^{i\theta}}|\leq 1+1/\delta$. Hence, the
function $K(e_1,R)\ni z\mapsto \la dG_z(e_1), e_1\ra$ is bounded.

(1$^{''}$) We argue as before, but, fixed $z\in K(e_1,R)$, we take $r=r(z):=\delta |1-z_1|^{1/2}$. Hence, for $j=2,\ldots, n$, we have
\begin{equation*}
\begin{split}
\frac{\la dG_z(e_j), e_1\ra}{|1-z_1|^{1/2}} &=\frac{1}{2\pi i|1-z_1|^{1/2}}\int_{|\zeta|=r} \frac{\la G(z+\zeta e_j), e_1\ra}{\zeta^2}d\zeta \\&=\frac{1}{2\pi\delta}\int_0^{2\pi} \frac{\la G(z+re^{i\theta} e_j), e_1\ra}{|1-z_1|}e^{-i\theta} d\theta.
\end{split}
\end{equation*}
By the choice of  $r$, the points $z+re_j\in
K(e_1, R')$, $j=2,\ldots, n$, and we can conclude as before.

(2) Let us consider the slice reduction $g_{e_1}(\zeta)=\la
G(\zeta e_1), e_1\ra$. By Theorem \ref{semign} it follows that
$\lim_{(0,1)\ni r\to 1}g_{e_1}(r)/(r-1)=\beta$. Since the function $\B^n
\ni z\mapsto \la G(z), e_1\ra /(z_1-1)$ is bounded in any Kor\'anyi
region by \eqref{ipo1}, $\check{\hbox{C}}$irca's theorem
\cite[Theorem 8.4.8]{Ru} implies (2).

(3) By Theorem \ref{one-gv}, we have $\lim_{(0,1)\ni r\to 1}g'_{e_1}(r)=\beta$, that is, $\lim_{(0,1)\ni r\to 1}\la dG_{re_1}(e_1),e_1\ra =\beta$. By (1) the map $\B^n\ni z\mapsto \la dG_z(e_1),e_1\ra$ is bounded in any Kor\'anyi region, and once again (3) follows by $\check{\hbox{C}}$irca's theorem \cite[Theorem 8.4.8]{Ru}.
\end{proof}

Assuming slightly more regularity at $e_1$ we can prove the following intermediate result:

\begin{proposition}\label{beta}
Let $G$ be an infinitesimal generator on $\B^n$. Suppose $\angle\lim_{z\to e_1}G(z)=0$ and
\begin{equation}
\B^n\ni z\mapsto\frac{|\la G(z),e_1\ra|}{|z_1-1|}\quad \hbox{is  bounded in any Kor\'anyi region.}
\end{equation}
Then $e_1$ is a BRNP for $G$ and $1$ is a BRNP for $g_v$ for all $v\in \mathcal L_{e_1}$. Moreover, if $\beta\in \R$ denotes the dilation of $G$ at $e_1$ and $\beta_v$ denotes the dilation of $g_v$ at $1$, then for all $v\in \mathcal L_{e_1}$ it follows $\beta_v=\beta$.
\end{proposition}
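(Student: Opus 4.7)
The plan is to first invoke Proposition \ref{JWC-1} to identify the dilation $\beta$ and the restricted $K$-limit of $\la G(z),e_1\ra/(z_1-1)$ at $e_1$, then push this limit through the explicit slice-reduction formula \eqref{explicit} to compute the non-tangential limit of $g_v(\zeta)/(\zeta-1)$ at $1$ for an arbitrary $v\in\mathcal L_{e_1}$.

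Since $\angle\lim_{z\to e_1}G(z)=0$ implies in particular $\lim_{(0,1)\ni r\to 1}G(re_1)=0$, the hypotheses of Proposition \ref{JWC-1} are met, so $e_1$ is a BRNP for $G$ with some dilation $\beta\in\R$ and $\angle_K\lim_{z\to e_1}\la G(z),e_1\ra/(z_1-1)=\beta$. By Theorem \ref{one-gv}, for every $v\in\mathcal L_{e_1}$ the slice reduction $g_v$ is an infinitesimal generator of $\D$ with BRNP at $1$ and dilation $\beta_v\leq\beta$, and moreover the non-tangential limit
\[
\beta_v=\angle\lim_{\zeta\to 1}\frac{g_v(\zeta)}{\zeta-1}
\]
exists. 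The remaining task is to show that this limit equals $\beta$ for every individual $v$, not merely on the supremum over $v$.

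Fix $v=(\al,v'')\in\mathcal L_{e_1}$ and any non-tangential sequence $\zeta_k\to 1$ in $\D$. Using \eqref{explicit} together with the identity $\la\v_v(\zeta),e_1\ra-1=\al^2(\zeta-1)$, I would rewrite
\[
\frac{g_v(\zeta_k)}{\zeta_k-1}=\zeta_k\cdot\frac{G_1(\v_v(\zeta_k))}{\la\v_v(\zeta_k),e_1\ra-1}-\frac{1}{\al}\la G(\v_v(\zeta_k)),v\ra.
\]
A short computation from the definition of $\v_v$, together with $1-\|\v_v(\zeta)\|^2=\al^2(1-|\zeta|^2)$, shows that $\la\v_v(\zeta_k),e_1\ra\to 1$ non-tangentially in $\D$ and that $\|\v_v(\zeta_k)-\la\v_v(\zeta_k),e_1\ra e_1\|^2/(1-|\la\v_v(\zeta_k),e_1\ra|^2)\to 0$, so $\v_v(\zeta_k)$ approaches $e_1$ in the restricted $K$-sense. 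Proposition \ref{JWC-1}(2) then forces the first term on the right to converge to $\beta$, while the second term tends to $0$ because $\v_v(\zeta_k)\to e_1$ non-tangentially and $\angle\lim_{z\to e_1}G(z)=0$. Hence $\beta_v=\beta$. The only genuine step is the geometric verification that the geodesic map $\zeta\mapsto\v_v(\zeta)$ sends non-tangential approaches in $\D$ to restricted $K$-approaches in $\B^n$, allowing a $K$-restricted conclusion on $\B^n$ to feed a non-tangential computation on the disc; everything else is an assembly of Proposition \ref{JWC-1} and Theorem \ref{one-gv}.
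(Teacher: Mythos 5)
Your proposal is correct and follows essentially the same route as the paper: the paper likewise feeds Proposition \ref{JWC-1}.(2) through the explicit formula \eqref{explicit} for $g_v$, using that the curve $r\mapsto\v_v(r)$ (the paper works only radially, which suffices since Theorem \ref{one-gv} already guarantees the non-tangential limit defining $\beta_v$ exists) approaches $e_1$ in the restricted $K$-sense, with the remainder terms killed by $\angle\lim_{z\to e_1}G(z)=0$. The only cosmetic difference is that you verify the limit along arbitrary non-tangential sequences in $\D$ rather than just the radius.
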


\begin{proof}
Let $v\in \mathcal L_{e_1}$. Let $g_v$ be the slice reduction to $v$ of $G$. Write $G=(G_1,G'')$. Taking into account that  for all $v\in \mathcal L_{e_1}$ the curve $(0,1)\ni r\mapsto \v_v(r)$ tends to $e_1$ non-tangentially it follows that  $\lim_{(0,1)\ni r\to1}G(\v_v(r))\to 0$. By Theorem \ref{one-gv} and Proposition~\ref{JWC-1}
\begin{equation*}
\begin{split}
\beta_v&=\lim_{(0,1)\ni r\to 1}\frac{g_v(r)}{r-1}\\&=\lim_{(0,1)\ni r\to 1}\frac{\frac{1}{\al^2}G_1(\v_v(r))+\frac{1-\al^2}{\al^2}(r-1)G_1(\v_v(r))-\frac{1}{\al}(r-1)\la G''(\v_v(r)),v''\ra}{r-1}
\\&=\frac{1}{\al^2}\lim_{(0,1)\ni r\to 1}\frac{G_1(\v_v(r))}{r-1}
=\frac{1}{\al^2}\lim_{(0,1)\ni r\to 1}\frac{G_1(\v_v(r))}{\la \v_v(r),e_1\ra -1}\frac{\la \v_v(r),e_1 \ra-1}{r-1}\\&=\frac{1}{\al^2}\beta \al^2=\beta,
\end{split}
\end{equation*}
and we are done.
\end{proof}

\begin{proof}[Proof of Theorem \ref{JWC}] The hypothesis \eqref{ipo} implies that $\angle \lim_{z\to e_1}G(z)=0$ and \eqref{ipo1}. Thus, Proposition \ref{JWC-1} applies and (1$^{''}$), (2) and (3) follow.

(1$^{'}$) The boundness of $\la dG_z(e_1),e_1\ra$ in any Kor\'anyi region follows again from Proposition \ref{JWC-1}. The proof that $\B^n\ni z\mapsto \la dG_z(e_h),e_k\ra$ is bounded in any Kor\'anyi region for $h,k=2,\ldots, n$ is similar to the proof of (1$^{''}$) in Proposition \ref{JWC-1}. Thus, we just sketch it here. Let $R,R',\delta$ as in the proof of Proposition \ref{JWC-1}. Fix $z\in K(e_1,R)$ and let $r=r(z):=\delta |1-z_1|^{1/2}$. Then for $h,k=2,\ldots, n$,
\begin{equation*}
\begin{split}
\la dG_z(e_h), e_k\ra &=\frac{1}{2\pi i}\int_{|\zeta|=r} \frac{\la G(z+\zeta e_h), e_k\ra}{\zeta^2}d\zeta =\frac{1}{2\pi}\int_0^{2\pi} \frac{\la G(z+re^{i\theta} e_h), e_k\ra}{r}e^{-i\theta} d\theta\\&=
\frac{1}{2\pi\delta}\int_0^{2\pi} \frac{\la G(z+re^{i\theta} e_h), e_k\ra}{|1-z_1|^{1/2}}e^{-i\theta} d\theta.
\end{split}
\end{equation*}
By the choice of  $r$, the points $z+re_h\in K(e_1, R')$, $h=2,\ldots, n$. Hence \eqref{ipo}.$(\ast\ast)$ guarantees that $z\mapsto \la dG_z(e_h), e_k\ra$ is bounded in $K(e_1,R)$.

(1$^{'''}$) We retain the notations introduced in the proof of Proposition \ref{JWC-1}. Fix $z\in K(e_1,R)$ and let $r=r(z):=\delta |1-z_1|$. Then, for $j=2,\ldots, n$
\begin{equation}\label{boh}
\begin{split}
|1-z_1|^{1/2}\la dG_z(e_1), e_j\ra &=\frac{|1-z_1|^{1/2}}{2\pi i}\int_{|\zeta|=r} \frac{\la G(z_1+\zeta,z''), e_j\ra}{\zeta^2}d\zeta \\&=\frac{1}{2\pi\delta}\int_0^{2\pi} \frac{\la G(z_1+re^{i\theta},z''), e_j\ra}{|1-(z_1+re^{i\theta})|^{1/2}}e^{-i\theta} \left|\frac{1-(z_1+re^{i\theta})}{1-z_1}\right|^{1/2} d\theta.
\end{split}
\end{equation}
Again, by the choice of  $r$, the points $(z_1+re^{i\theta},z'')\in K(e_1, R')$. Since $|1-(z_1+re^{i\theta})|/|1-z_1|\leq 1+\delta$, hypothesis \eqref{ipo}.$(\ast\ast)$ guarantees that $z\mapsto |1-z_1|^{1/2}\la dG_z(e_1), e_j\ra$ is bounded in $K(e_1,R)$.

(4) Let $v\in \mathcal L_{e_1}$. Let $g_v$ be the slice reduction to $v$ of $G$. Since $\v_v'(\zeta)=\al v$,
\begin{equation*}
\begin{split}
g'_v(\zeta)&=\frac{1}{\al}\la dG_{\v_v(\zeta)}(v),e_1\ra +(\zeta-1)\frac{1-\al^2}{\al}\la dG_{\v_v(\zeta)}(v),e_1\ra
-(\zeta-1)\la dG''_{\v_v(\zeta)}(v),v''\ra\\&+\frac{1-\al^2}{\al^2}\la G(\v_v(\zeta)), e_1\ra -\frac{1}{\al}\la G''(\v_v(\zeta)),v''\ra.  \\
\end{split}
\end{equation*}
By Proposition \ref{beta} and  Theorem \ref{one-gv} we have $\lim_{(0,1)\ni r \to 1}g'_v(r)=\beta$.

Taking into account that $(0,1)\ni r\mapsto \v_v(r)$ tends to $e_1$ non-tangentially, we have  $\lim_{r\to 1} G(\v_v(r))=0$ and $r \mapsto \la dG_{\v_v(r)}(v),e_1\ra$ is bounded by (1$^{'}$) and (1$^{''}$). Moreover, by (1$^{'}$) and (1$^{'''}$)  it follows that for $v=\al e_1+\sum_{j=2}^n v_j e_j\in \mathcal L_{e_1}$
\begin{equation*}
\begin{split}
\lim_{r\to 1}(r-1)\la dG''_{\v_v(r)}(v),v''\ra&=\lim_{r\to 1}(r-1)\left(\sum_{j=2}^n \al \overline{v_j}\la dG_{\v_v(r)}(e_1),e_j\ra\right.\\&\left.+\sum_{h,k=2}^n v_h\overline{v_k}\la dG_{\v_v(r)}(e_h),e_k\ra \right)=
\lim_{r\to 1}\sum_{j=2}^n \al \overline{v_j}(r-1)\la dG_{\v_v(r)}(e_1),e_j\ra\\&=
\sum_{j=2}^n\al \overline{v_j} \lim_{r\to 1}\frac{(r-1)}{(1-\la \v_v(r),e_1\ra)^{1/2}}(1-\la \v_v(r),e_1\ra)^{1/2}\la dG_{\v_v(r)}(e_1),e_j\ra\\&
=-\sum_{j=2}^n\frac{\overline{v_j}}{\al}\lim_{r\to 1}(r-1)^{1/2}(1-\la \v_v(r),e_1\ra)^{1/2}\la dG_{\v_v(r)}(e_1),e_j\ra=0.
\end{split}
\end{equation*}
Therefore,
\begin{equation}\label{g-der}
\beta=\lim_{(0,1)\ni r\to 1}g'_v(r)=\frac{1}{\al}\lim_{(0,1)\ni r\to 1}\la dG_{\v_v(r)}(v),e_1\ra.
\end{equation}
Expanding \eqref{g-der}, and taking into account (3), we have
\begin{equation*}
\begin{split}
\beta&=\frac{1}{\al}\lim_{(0,1)\ni r\to 1}\left(\la dG_{\v_v(r)}(\al e_1),e_1\ra+\sum_{j=2}^n v_j\la dG_{\v_v(r)} (e_j), e_1\ra\right)\\&=\beta+\frac{1}{\al}\lim_{(0,1)\ni r\to 1}\left(\sum_{j=2}^n v_j\la dG_{\v_v(r)} (e_j), e_1\ra\right),
\end{split}
\end{equation*}
from which it follows that, for all choices of $v$
\[
\lim_{(0,1)\ni r\to 1}\left(\sum_{j=2}^n v_j\la dG_{\v_v(r)} (e_j), e_1\ra\right)=0.
\]
For the arbitrariness of $v$ we have
\[
\lim_{(0,1)\ni r\to 1}\la dG_{\v_v(r)} (e_j), e_1\ra=0 \quad j=2,\ldots, n.
\]
Since the function $\B^n\ni z\mapsto \la dG_{z}(e_j),e_1\ra$ is bounded in every Kor\'anyi region and  has limit $0$ along a non-tangential curve, by $\check{\hbox{C}}$irca's theorem \cite[Theorem 8.4.8]{Ru}, it has restricted $K$-limit $0$, and this proves (4).
\end{proof}

\begin{example}\label{esempio}(cfr. \cite[Example 4.2]{BCD}).
Let $G(z_1,z_2):=(0, -z_2/(1-z_1))$. Then $G$ is an infinitesimal generator in $\B^n$ with BRNP $e_1$ and dilation $\beta=0$. Note that $|\la G(z), e_1\ra |\equiv 0$, and Proposition \ref{JWC-1} applies.
However, $\angle\lim_{z\to e_1}G(z)$ does not exist. In fact, $dG_z$ is not bounded in any Kor\'anyi region: a direct computation shows that
\[
dG_z=\left(
       \begin{array}{cc}
         0 & 0 \\
         -\frac{z_2}{(1-z_1)^2} & -\frac{1}{1-z_1} \\
       \end{array}
     \right).
\]
Moreover, given $v=(\al, v_2)\in \mathcal L_{e_1}$ it is easy to see that
\[
g_v(\zeta)=(1-\frac{1}{\al^2})(\zeta-1),
\]
hence, the dilation $\beta_v$ of $g_v$  at $1$ is $1-1/\al^2$. Thus, $\beta_v<0=\beta$ for all $v\in \mathcal L_{e_1}\setminus \{e_1\}$, and $\beta_{e_1}=0$.
\end{example}

\subsection{(Dis)similarities between the Julia-Wolff-Carath\'eodory theorems for maps and for infinitesimal generators and open questions}\label{discuto}

Hypothesis \eqref{ipo} is stronger than the corresponding starting hypothesis in Rudin's theorem, which involves only finiteness of the liminf defining $\al_f(e_1)$. In fact, part of the work in proving Rudin's theorem is devoted to show that such a condition, via Julia's lemma, implies boundness of suitable functions in any Kor\'anyi region. Julia's lemmas for infinitesimal generators (see Theorem \ref{one-gv}) are however -- and, in a certain sense, very naturally -- weaker than those for self-mappings and this forced us to use such a stronger hypothesis. We do not know whether there exists any weaker condition in terms of liminf of some function of $G$ which assures (and it is equivalent to) hypothesis \eqref{ipo}.

It would be also interesting to find an example (if any) of an infinitesimal generator satisfying  the hypothesis of Proposition \ref{beta} but not hypothesis \eqref{ipo}.

Moreover, with our techniques, we are not able to prove (or disprove) for infinitesimal generators the statements  corresponding to (5) and (6) of Theorem \ref{RudinJWC}. Namely, under the hypothesis of Theorem \ref{JWC}, we do not know whether  for $j=2,\ldots, n$ it holds
\begin{equation}\label{nose}
\angle_K\lim_{z\to e_1}\frac{\la G(z), e_j\ra}{(1-z_1)^{1/2}}=0, \quad \angle_K\lim_{z\to e_1}(1-z_1)^{1/2}\langle dG_z(e_1), e_j\rangle=0.
\end{equation}
By $\check{\hbox{C}}$irca's theorem \cite[Theorem 8.4.8]{Ru} and  Theorem \ref{JWC}.(1$^{'''}$) and using \eqref{boh}, these results hold if one can prove that for $j=2,\ldots, n$
\begin{equation}\label{radile}
\lim_{(0,1)\ni r\to 1}\frac{\la G(re_1), e_j\ra}{(1-r)^{1/2}}=0
\end{equation}
In the case of Rudin's theorem, the corresponding radial limit is proven using Julia's lemma and the strong constrain of sending the ball into itself. 

Let $f:\B^n\to \B^n$ be a holomorphic self-map having a BRFP at $e_1$. Then $G(z):=f(z)-z$ is an infinitesimal generator (see \cite[Corollary 3.3.1]{Shb} and \cite{RS}) and, using Theorem \ref{RudinJWC},  it is not hard to see that $G$ satisfies \eqref{ipo} at $e_1$. Again by Theorem \ref{RudinJWC} it is easy to see that $G$ satisfies \eqref{radile}, and hence \eqref{nose}. Therefore, for the dense subclass of infinitesimal generators of the form $f(z)-z$ with $f:\B^n\to \B^n$ holomorphic, the full analogue of Rudin's theorem holds.

Thus, it is reasonable to believe that even in the general case  \eqref{radile} holds and should follow from Julia's lemma for infinitesimal generators and the condition of being an infinitesimal generators. However, we are not able to prove the result.

\section{Higher order jets of  generators at BRNPs}\label{sec-higher}

Let $G$ be an infinitesimal generator on $\B^n$ having a boundary regular null point (BRNP) at $e_1$ and assume $G$ is $C^3$ at $e_1$. We can expand $G$ in the form
\begin{equation}\label{expansion}
G(z)=T(z-e_1)+Q_2(z-e_1)+Q_3(z-e_1)+o(|z-e_1|^3),
\end{equation}
where $Q_j$ is a $n$-tuple of homogeneous polynomial of degree $j$ for $j=2,3$.
Then by Theorem \ref{JWC} we can write,
\begin{equation}\label{T}
T=\left(
    \begin{array}{cccc}
      \beta & 0 &\ldots & 0\\
      t_2 & s_{22} &\ldots & s_{2n} \\
      \vdots & \vdots & \vdots & \vdots\\
      t_n & s_{n2} &\ldots & s_{nn}
    \end{array}
  \right)
\end{equation}
where $\beta\in \R$ is the dilation of $G$ at $e_1$ and $t_j, s_{jk}\in \C$. We set $S=(s_{jk})_{j,k=2,\ldots, n}$.

Also, we write $(x,y)\in \C\times \C^{n-1}$ with $y=(y_2,\ldots, y_n)$ and use multi-indices notations. Namely, $y^{J}=y_2^{j_2}\cdots y_n^{j_n}$, if $J=(j_2,\ldots,j_n)$; for a multi-index $I=(i_1,\ldots, i_n)$ we let $|I|=\sum_{j=1}^n i_j$.

We let
\begin{equation}\label{Q2}
Q_2(x,y)=\left( \sum_{I=(i_1,J)\in \N^n, |I|=2} q^1_{i_1,J}x^{i_1}y^{J},\ldots, \sum_{I=(i_1,J)\in \N^n, |I|=2} q^n_{i_1,J}x^{i_1}y^{J} \right),
\end{equation}
for some $q^k_{i_1,J}\in \C$.

Now we characterize boundary jets of infinitesimal generators:

\begin{proposition}\label{basic}
Let $G:\B^n\to \C^n$ be an infinitesimal generator of class $C^3$ at $e_1$. Assume that $e_1$ is a  BRNP with dilation $\beta=0$. Let \eqref{expansion} be the expansion of $G$ at $e_1$, with $T$ given by \eqref{T} and $Q_2$ given by \eqref{Q2}. Then $\Re q^1_{2,0}\geq 0$ and $\Re s_{kk}\leq -|q^1_{0,e_{2k}}|$ for all $k=2,\ldots, n$.

Moreover,  $\Re g_v''(1)=0$ for all $v\in \mathcal L_{e_1}$ if and only if
\begin{equation}\label{cond}
\Re q^1_{2,0}=\Re s_{kk}=0, \quad k=2,\ldots, n,
\end{equation}
and, $g_v''(1)=0$ for all $v\in \mathcal L_{e_1}$ if and only if $q^1_{2,0}=s_{kk}=0, \quad k=2,\ldots, n$.

If \eqref{cond} holds, then
\begin{itemize}
\item $S$ is anti-Hermitian,
  \item $q^1_{0,J}=0$ for all $J\in \N^n$, $|J|=2$,
  \item $q^1_{1,e_k}=q^h_{0,e_k+e_h}$ for $h,k=2,\ldots, n$,
  \item $q^h_{0,e_k+e_l}=0$ for all $h,k,l=2,\ldots, n$ with $h\neq k, l$,
  \item $\Im q^k_{1,e_k}=\Im q^1_{2,0}=0$ and $\Re q^k_{1,e_k}\geq 0$ for $k=2,\ldots, n$,
  \item the matrix $\tilde{Q}:=(q^k_{1,e_h})_{h,k=2,\ldots, n}$ is Hermitian and positive semi-definite.
\end{itemize}
Moreover, there exists $\delta\leq 0$ such that
\begin{equation}\label{Q3a0}
Q^1_3(x,y)=\delta x^3+\sum_{j=2}^n (q^1_{1,e_j}+\overline{q}^j_{2,0}) x^2 y_j.
\end{equation}
Finally, if $\Re g_v''(1)=0$ for all $v\in \mathcal L_{e_1}$ then $g_v'''(1)=0$ for all $v\in \mathcal L_{e_1}$ if and only if $\Re q^k_{1,e_k}= 0$,  $q_{1,e_k}^h=0$ for $2\leq k<h\leq n$  and $\delta=0$.
\end{proposition}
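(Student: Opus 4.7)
The plan is to reduce everything to the one-dimensional setting via the slice reductions $g_v$. By Proposition~\ref{cbDW} and the hypothesis $\beta=0$, each $g_v$ is an infinitesimal generator on $\D$ with BRNP $1$ and dilation $0$; the Berkson-Porta formula \eqref{Berkson-Porta} then gives $g_v(\zeta)=(\zeta-1)^2 p_v(\zeta)$ with $\Re p_v\ge 0$, so under the $C^3$ hypothesis $g_v''(1)=2p_v(1)$ and $g_v'''(1)=6 p_v'(1)$. In particular $\Re g_v''(1)\ge 0$ always, and if $p_v(1)\in i\R$ then $p_v-p_v(1)$ is a Herglotz function vanishing non-tangentially at $1$, so the classical boundary Schwarz lemma (compose with the Cayley transform $w\mapsto(1-w)/(1+w)$ to get a disc self-map fixing $1$) yields $p_v'(1)\in\R_{\le 0}$, and hence $g_v'''(1)\in\R_{\le 0}$.

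First I would expand $g_v$ around $\zeta=1$. Writing $w=\zeta-1$ and $\v_v(\zeta)-e_1=\al wv$, the homogeneity of $Q_2,Q_3$ together with $T(v)_1=\beta\al=0$ and formula \eqref{explicit} produces
\[
\tfrac12 g_v''(1)=Q_2^1(v)-\la T(v),v\ra,
\]
\[
\tfrac16 g_v'''(1)=(1-\al^2)Q_2^1(v)+\al Q_3^1(v)-\al\sum_{k=2}^n Q_2^k(v)\overline{v_k}.
\]
The opening inequalities follow by testing: $v=e_1$ gives $\Re q^1_{2,0}\ge 0$, and $v=\cos\theta\,e_1+\sin\theta\,e^{i\phi}e_k$ with $\cos\theta\to 0^+$ isolates the $\sin^2\theta$ coefficient $\Re(q^1_{0,2e_k}e^{2i\phi})-\Re s_{kk}\ge 0$; minimising over $\phi$ gives $\Re s_{kk}\le-|q^1_{0,2e_k}|$.

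Since $\mathcal L_{e_1}$ is relatively open in the unit sphere, vanishing of $\Re g_v''(1)$ (resp.\ of $g_v''(1)$) on $\mathcal L_{e_1}$ extends to vanishing of the corresponding quadratic polynomial identity in the real variables $(\al,\Re v_k,\Im v_k)_{k\ge 2}$. Matching coefficients monomial by monomial recovers $\Re q^1_{2,0}=0$, $\Re s_{kk}=0$, $q^1_{0,J}=0$ for $|J|=2$ (the latter using the sharp bound just proved), the anti-Hermitian structure $s_{kj}=-\overline{s_{jk}}$, and $q^1_{1,e_k}=\overline{t_k}$; in particular $S$ is anti-Hermitian. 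Dropping the ``$\Re$'' throughout gives the stronger equivalence $g_v''(1)=0$ iff $q^1_{2,0}=s_{kk}=0$.

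The remaining conclusions listed under \eqref{cond} come from the analogous matching applied to the cubic expansion. Under \eqref{cond} one substitutes the simplified $Q_2^1(v)=q^1_{2,0}\al^2+\al\sum_k q^1_{1,e_k}v_k$ into the formula for $\tfrac16 g_v'''(1)$; imposing both $\Im g_v'''(1)=0$ (from the Schwarz observation) and $\Re g_v'''(1)\le 0$, and expanding over slices that excite first one and then two of the $v_k$-directions, produces successively: the reality and non-positivity of $\delta$; the vanishing $q^1_{1,J}=q^1_{0,J}=0$ for $|J|\ge 2$; the identification $q^1_{1,e_k}=q^h_{0,e_k+e_h}$; the vanishing $q^h_{0,e_k+e_l}=0$ whenever $h\notin\{k,l\}$; the equalities $\Im q^k_{1,e_k}=\Im q^1_{2,0}=0$; the Hermitian positive semidefiniteness of $\tilde{Q}=(q^k_{1,e_h})_{h,k}$ (so in particular $\Re q^k_{1,e_k}\ge 0$); and the displayed form \eqref{Q3a0}. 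The final equivalence ``$g_v'''(1)=0$ for all $v$'' amounts to forcing each of the non-negative quantities above to vanish, producing $\Re q^k_{1,e_k}=0$, $q^h_{1,e_k}=0$ for $h>k$ (via Hermitian symmetry of $\tilde{Q}$), and $\delta=0$.

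The principal obstacle will be the sheer combinatorial bookkeeping of the cubic matching: the cubic identity in $(\al,\Re v_k,\Im v_k)$ carries many independent monomials, and several of the listed relations only emerge when one excites two distinct $v_k$-directions simultaneously. The key conceptual inputs beyond the explicit formulas for $g_v''(1)$ and $g_v'''(1)$ are Proposition~\ref{cbDW} and the boundary Schwarz lemma for Herglotz functions giving $p_v'(1)\in\R_{\le 0}$ whenever $p_v(1)\in i\R$.
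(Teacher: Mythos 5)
Your proposal is correct and follows essentially the same route as the paper: slice reduction to $g_v$, the explicit formulas $a_v=\tfrac12 g_v''(1)$ and $b_v=\tfrac16 g_v'''(1)$, Berkson--Porta giving $\Re a_v\ge 0$ and (once $a_v\in i\R$) $b_v\in\R_{\le 0}$, followed by degree-by-degree matching in $\al$ and the $v_k$. The paper decouples the harmonic and non-harmonic parts by averaging over rotations $v_k\mapsto e^{i\theta_k}v_k$ rather than by your coefficient-matching in real variables, and cites Shoikhet for the fact you derive from the boundary Schwarz lemma, but these are cosmetic differences.
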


\begin{proof}
Let $v\in \mathcal L_{e_1}$, and let $g_v:\D\to \C$ be the slice reduction of $G$ with respect to $v$. Let $G$ be given by \eqref{expansion}, with $Tv=(T^1v,T''v)\in \C\times \C^{n-1}$, $Q_2(v)=(Q_2^1(v), Q_2''(v))\in \C\times \C^{n-1}$ and  $Q_3(v)=(Q_3^1(v), Q_3''(v))\in \C\times \C^{n-1}$. By Theorem \ref{JWC},
\[
T^1v=\la Tv,e_1\ra=\la dG_{e_1}(v), e_1\ra =\al \beta =0.
\]
Thus, a direct computation from  \eqref{explicit} shows that
\[
g_v(\zeta)=a_v(\zeta-1)^2+b_v (\zeta-1)^3+o(|\zeta-1|^3),
\]
with
\begin{equation}\begin{split}\label{a_v}
    a_v&=Q_2^1(v)-\la T''v,v''\ra,\\
    b_v&=(1-\al^2)Q_2^1(v)+\al Q_3^1(v)-\al \la Q_2''(v),v''\ra.
\end{split}\end{equation}
Now, $g_v(\zeta)=a_v(\zeta-1)^2 +b_v(\zeta-1)^3+o(|\zeta-1|^3)$ is an infinitesimal generator in the unit disc and thus, by Berkson-Porta formula, it has to hold $\Re (a_v+b_v(\zeta-1)+o(|\zeta-1|))\geq 0$.
Therefore, in particular, $\Re a_v\geq 0$ for all $v\in \mathcal L_{e_1}$ (see also, \cite{Sh}).

By writing down explicitly the condition $\Re a_v\geq 0$, we find
that for all $v=(\al,v_2,\ldots, v_n)$ with $\al\in (0,1]$,
$\sum_{j=2}^n |v_j|^2=1-\al^2$,
\begin{equation}\label{jet2}
\begin{split}
&\sum_{2\leq j\leq k\leq n} \Re (q^1_{0,e_j+e_k}v_jv_k)-\sum_{j,k=2}^n \Re (s_{kj}v_j\overline{v}_k)\\&+\al \left[\sum_{j=2}^n \Re (q^1_{1,e_j} v_j)-\sum_{k=2}^n \Re (t_k\overline{v}_k)\right]+\al^2\Re q_{2,0}^1\geq 0.
\end{split}
\end{equation}
For $\al=1, v''=0$, we find $\Re q_{2,0}^1\geq 0$.

When $\al\to 0$, the previous inequality implies that the term of degree $0$ in $\al$ has to have real part $\geq 0$, namely
\begin{equation}\label{al0}
\sum_{2\leq j\leq k\leq n} \Re (q^1_{0,e_j+e_k}v_jv_k)-\sum_{j,k=2}^n \Re (s_{kj}v_j\overline{v}_k)\geq 0
\end{equation}
for $\|v''\|=1$. Now, fix $k\in\{2,\ldots, n\}$ and substitute $v''$ with $e^{i \theta_k}e_k$ for $\theta_k\in [0,2\pi]$. We obtain $\Re (q^1_{0,2e_k}e^{2i\theta_k})-\Re s_{kk}\geq 0$, which, for the arbitrariness of $\theta_k$,  implies $\Re s_{kk}\leq -|q^1_{0,2e_k}|$  for $k=2,\ldots, n$.

Now, $\Re g''_v(1)=0$ if and only if $\Re a_v=0$ for all $v\in \mathcal L_{e_1}$. Therefore, it is clear from the previous considerations that if $\Re g''_v(1)=0$  for all $v\in \mathcal L_{e_1}$, then necessarily $\Re q_{2,0}^1=0$. Moreover, the left hand side of \eqref{al0} is equal to $0$. Therefore, fixing  $k\in\{2,\ldots, n\}$ and substituting $v''$ with $e^{i \theta_k}e_k$ for $\theta_k\in [0,2\pi]$ we obtain $\Re (q^1_{0,2e_k}e^{2i\theta_k})-\Re s_{kk}= 0$. Integrating with respect to $\theta_k$ in $[0,2\pi]$ the harmonic term vanishes and we obtain $\Re s_{kk}= 0$ for $k=2,\ldots, n$.

Assume that $\Re s_{kk}=0$ for $k=2,\ldots, n$. Therefore the
non-harmonic part in \eqref{al0} is zero, and we claim that this
implies that \eqref{al0} is, in fact, identically $0$. Indeed, we
rewrite \eqref{al0} as
\[
 \sum_{2\leq j\leq k\leq n} \Re (q^1_{0,e_j+e_k}v_jv_k)-\sum_{2\leq j<k\leq n} \Re [(s_{kj}+\overline{s}_{jk})v_j\overline{v}_k]\geq 0.
\]
Taking $|v_k|=1, v_j=0$ for $j\neq k$, we find immediately $q^1_{0,2e_k}=0$, $k=2,\ldots, n$. Next, we take $v_2=\zeta$, $v_3=\pm \zeta$ with $|\zeta|=1/\sqrt{2}$ and $v_4=\ldots=v_n=0$ and we obtain
\[
\pm\left(\Re (q^1_{0,e_2+e_3}\zeta^2)-\Re (s_{32}+\overline{s}_{23})|\zeta|^2\right)\geq 0.
\]
Decoupling the harmonic and non-harmonic terms by integrating as before, we obtain
\[
\Re (s_{32}+\overline{s}_{23})=0, \quad q^1_{0,e_2+e_3}=0.
\]
Finally, taking $v_2=\zeta$, $v_3=e^{i\theta} \zeta$ with $|\zeta|=1/\sqrt{2}$, $\theta\in [0,2\pi]$ and $v_4=\ldots=v_n=0$ we obtain
\[
-|\zeta|^2\Re [(s_{32}+\overline{s}_{23})e^{-i\theta}]\geq 0
\]
which implies $s_{32}+\overline{s}_{23}=0$. A similar argument works for the other indices. This proves that $S$ is anti-Hermitian and $q^1_{0,J}=0$ for all $|J|=2$. Moreover, this proves that the terms of degree $0$ in $\al$ in \eqref{jet2} are identically zero. Therefore, the  condition $\Re s_{kk}=0$ for all $k$ is sufficient for \eqref{jet2} in degree zero in $\al$ to be equal to zero for all $v$  when $\al\to 0$.

Now, since the terms of degree $0$ in $\al$ in \eqref{jet2} are vanishing identically, the terms of degree $1$  in $\al$ has to have non negative real part as $\al\to 0$, that is
\begin{equation}\label{al1}
\sum_{k=2}^n \Re [(q^1_{1,e_k}-\overline{t}_k)v_k]\geq 0,
\end{equation}
which clearly implies $q^1_{1,e_k}=\overline{t}_k$ for $k=2,\ldots, n$. Hence, if $\Re s_{kk}=\Re q^1_{2,0}=0$ then $\Re g''_v(1)=0$ for all $v\in \mathcal L_{e_1}$.

From the previous considerations it follows easily that $g_v''(1)=0$ for all $v\in \mathcal L_{e_1}$ if and only if  $q^1_{2,0}=s_{kk}=0$ for $k=2,\ldots, n$.

Now, assume \eqref{cond}. Then for all $v\in \mathcal L_{e_1}$ we have  $\Re a_v=0$  namely, $g_v(\zeta)=(\zeta-1)^2[i\tilde{a}_v+b_v(\zeta-1)+o(|\zeta-1|)]$ for all $v\in \mathcal L_{e_1}$, where $\tilde{a}_v\in \R$. Berkson-Porta's formula (see \cite{Sh}) implies then $b_v\in \R$ and $b_v\leq 0$.

Taking into account what we have already proved,  writing
$Q^1_3(v)=\sum_{|(i_1,J)|=3} p^1_{i_1,J}\al^{i_1}v^J$, where we
used the multi-indices notation $v^J=v_2^{j_2}\cdots v_n^{j_n}$,
from \eqref{a_v}, the condition $b_v\leq 0$ becomes
\begin{equation}\label{jet3}
\begin{split}
&(1-\al^2)\left(q^1_{2,0}\al^2+\al \sum_{k=2}^n q^1_{1,e_k}v_k\right) +\al \sum_{|(i_1,J)|=3} p^1_{i_1,J}\al^{i_1}v^J\\&-\al \sum_{k=2}^n\sum_{|(i_1,J)|=2}q^k_{i_1,J}\al^{i_1}v^J\overline{v}_k\leq 0.
\end{split}
\end{equation}
For $\al=1, v''=0$, we obtain $p^1_{3,0}\leq 0$. And, if $g_v'''(1)=0$ for all $v$, that is $b_v=0$, then $p^1_{3,0}=0$.

Now, as before, we start looking at terms of smallest degree in $\al$ when $\al\to 0$. Since there are no terms of degree $0$ in $\al$, the smallest degree is $1$,  and we get
\begin{equation}\label{deg1a}
\sum_{k=2}^n q^1_{1,e_k}v_k+\sum_{|I|=3}p_{0,I}^1v^I-\sum_{k=2}^n\sum_{|J|=2}q^k_{0,J}v^J\overline{v}_k\leq 0,
\end{equation}
for all $v\in \C^{n-1}$ with $\|v\|=1$. Replacing $v$ by $-v$ the left-hand side of \eqref{deg1a} changes sign. Therefore we deduce that
\begin{equation}\label{deg1}
\sum_{k=2}^n q^1_{1,e_k}v_k+\sum_{|I|=3}p_{0,I}^1v^I-\sum_{k=2}^n\sum_{|J|=2}q^k_{0,J}v^J\overline{v}_k= 0,
\end{equation}
for all $v\in \C^{n-1}$ with $\|v\|=1$.
Replacing $v$ by $e^{i\theta}v$ for $\theta\in [0,2\pi]$ in \eqref{deg1}, dividing the equation by $e^{i\theta}$ and integrating with respect to $\theta$ in $[0,2\pi]$ the harmonic terms vanish and we obtain
\begin{equation}\label{deg1m}
\sum_{k=2}^n q^1_{1,e_k}v_k-\sum_{k=2}^n\sum_{|J|=2}q^k_{0,J}v^J\overline{v}_k= 0.
\end{equation}
Equation \eqref{deg1} implies then $\sum_{|I|=3}p_{0,I}^1v^I=0$, which is possible only if $p_{0,I}^1=0$ for all $|I|=3$.
Taking $v_k=e_k$ for $k=2,\ldots, n$ in \eqref{deg1m} we obtain $q^1_{1,e_k}=q^k_{0,2e_k}$  for $k=2,\ldots, n$.

Now, let $2\leq k_1< k_2\leq n$ and let $v=\frac{1}{\sqrt{2}}(e^{i\theta_1}e_{k_1}+e^{i\theta_2}e_{k_2})$ for $\theta_1,\theta_2\in [0,2\pi]$. Expanding \eqref{deg1m} with such a choice of $v$, multiplying by $2\sqrt{2}$ and taking into account that $q^1_{1,e_k}=q^k_{0,2e_k}$  for $k=2,\ldots, n$, we obtain
\begin{equation*}
\begin{split}
0=&e^{i\theta_1}(2q^1_{1,e_{k_1}}-q^{k_1}_{0,2e_{k_1}}-q^{k_2}_{0,e_{k_1}+e_{k_2}})+
e^{i\theta_2}(2q^1_{1,e_{k_2}}-q^{k_2}_{0,2e_{k_2}}-q^{k_1}_{0,e_{k_1}+e_{k_2}})
\\&-e^{i(2\theta_2-\theta_1)}q^{k_1}_{0,2e_{k_2}}-e^{i(2\theta_1-\theta_2)}q^{k_2}_{0,2e_{k_1}}
\\ &=e^{i\theta_1}(q^1_{1,e_{k_1}}-q^{k_2}_{0,e_{k_1}+e_{k_2}})+
e^{i\theta_2}(q^1_{1,e_{k_2}}-q^{k_1}_{0,e_{k_1}+e_{k_2}})-e^{i(2\theta_2-\theta_1)}q^{k_1}_{0,2e_{k_2}}-e^{i(2\theta_1-\theta_2)}q^{k_2}_{0,2e_{k_1}},
\end{split}
\end{equation*}
from which we deduce that $q^1_{1,e_{k}}=q^{h}_{0,e_{k}+e_{h}}$ and $q^{k}_{0,2e_{h}}=0$ for $k\neq h\in \{2,\ldots, n\}$.

Finally, let $2\leq k_1<k_2<k_3\leq n$ and  consider $v=\frac{1}{\sqrt{3}}(e^{i\theta_1}e_{k_1}+e^{i\theta_2}e_{k_2}+e^{i\theta_3}e_{k_3}$ for $\theta_1,\theta_2,\theta_3\in [0,2\pi]$. Expanding \eqref{deg1m} with such a choice of $v$,  multiplying by $3\sqrt{3}$,  we obtain
\begin{equation*}
\begin{split}
0&=e^{i\theta_1}(3q^1_{1,e_{k_1}}-q^{k_1}_{0,2e_{k_1}}-q^{k_2}_{0,e_{k_1}+e_{k_2}}-q^{k_3}_{0,e_{k_1}+e_{k_3}})+
e^{i\theta_2}(3q^1_{1,e_{k_2}}-q^{k_2}_{0,2e_{k_2}}-q^{k_1}_{0,e_{k_1}+e_{k_2}}-q^{k_3}_{0,e_{k_2}+e_{k_3}})\\&+
e^{i\theta_3}(3q^1_{1,e_{k_3}}-q^{k_3}_{0,2e_{k_3}}-q^{k_1}_{0,e_{k_1}+e_{k_3}}-q^{k_2}_{0,e_{k_2}+e_{k_3}})-
e^{i(2\theta_2-\theta_1)}q^{k_1}_{0,2e_{k_2}}-e^{i(2\theta_3-\theta_1)}q^{k_1}_{0,2e_{k_3}}\\&-
e^{i(2\theta_1-\theta_2)}q^{k_2}_{0,2e_{k_1}}-e^{i(2\theta_3-\theta_2)}q^{k_2}_{0,2e_{k_3}}-
e^{i(2\theta_1-\theta_3)}q^{k_3}_{0,2e_{k_1}}-e^{i(2\theta_2-\theta_3)}q^{k_3}_{0,2e_{k_2}}\\
&-e^{i(\theta_1+\theta_3-\theta_2)}q^{k_2}_{0,e_{k_1}+e_{k_3}}-e^{i(\theta_2+\theta_3-\theta_1)}q^{k_1}_{0,e_{k_2}+e_{k_3}}-
e^{i(\theta_1+\theta_2-\theta_3)}q^{k_3}_{0,e_{k_1}+e_{k_2}}=0.
\end{split}
\end{equation*}
The first three lines of the previous equation do not give any new information, but the last one implies
that $q^{k}_{0,e_{h}+e_{l}}=0$ for $k,h,l=2,\ldots,n$ and $k\neq  h,l$.

Therefore, the term of degree $1$ in $\al$, for $\al\to 0$ in
\eqref{jet3} identically vanishes. So we look at terms of degree
$2$ in $\al$ as $\al\to 0$. We have
\begin{equation}\label{deg2}
q^1_{2,0}+\sum_{|I|=2}p^1_{1,I}v^I-\sum_{j,k=2}^n q^k_{1,e_j}v_j\overline{v}_k\leq 0,
\end{equation}
for all $v\in \C^{n-1}$ with $\|v\|=1$.
Replacing $v_j$ with $e^{i\theta_j}v_j$ for $\theta_j\in [0,2\pi]$ and integrating, we get rid of the harmonic terms and we find
\[
q^1_{2,0}-\sum_{k=2}^n q_{1,e_k}^k|v_k|^2\leq 0.
\]
Taking $v=e_k$  we obtain $q^1_{2,0}-q^k_{1,e_k}\leq 0$. Since $\Re q^1_{2,0}=0$, this implies $\Im q^k_{1,e_k}=\Im q^1_{2,0}$ and $\Re q^k_{1,e_k}\geq 0$ for $k=2,\ldots, n$.

Now, the harmonic part in \eqref{deg2} must be real, that is
\[
\sum_{2\leq j\leq l\leq n}p^1_{1,e_j+e_l} v_jv_l-\sum_{k=2}^n\sum_{j=2, j\neq k}^n q^k_{1,e_j}v_j\overline{v}_k\in \R.
\]
Taking $v=e_k$, this immediately implies $p^1_{1,2e_{k}}=0$. Taking $v=\frac{1}{\sqrt{2}}e^{i\theta}(e_{k_1}+e_{k_2})$ with $2\leq k_1<k_2\leq n$ and $\theta\in [0,2\pi]$, we obtain
\[
e^{2i\theta}p^1_{1,e_{k_1}+e_{k_2}}-(q^{k_1}_{1,e_{k_2}}+q^{k_2}_{1,e_{k_1}})\in \R,
\]
which implies $p^1_{1,e_j+e_k}=0$ for $j,k=2,\ldots, n$, $j\neq k$. Next, setting $v=\frac{1}{\sqrt{2}}(e_{k_1}+e^{i\theta}e_{k_2})$
with $\theta\in [0,2\pi]$, $2\leq k_1< k_2\leq n$, we obtain $(e^{-i\theta}q^{k_1}_{1,e_{k_2}}+e^{i\theta}q^{k_2}_{1,e_{k_1}})\in \R$, that is
\[
\Im [e^{-i\theta}(q_{1,e_{k_2}}^{k_1}-\overline{q}^{k_1}_{1,e_{k_2}})]=0,
\]
hence $q_{1,e_{k_2}}^{k_1}-\overline{q}^{k_2}_{1,e_{k_1}}=0$. This, together with \eqref{deg2} implies that the matrix $\tilde{Q}=(q_{1,e_{k}}^h)_{h,k=2,\ldots, n}$ is Hermitian and positive semi-definite.

From the previous considerations, we also note that  $b_v=0$ implies $\Re q^k_{1,e_k}= 0$ and $q_{1,e_k}^h=0$ for $2\leq k<h\leq n$,  while, this latter condition implies that the term of order $2$ in $\al$ as $\al\to 0$ in \eqref{jet3} vanishes identically.

Now, we are left to impose the condition that the remaining terms give a non-positive real number. Looking at terms of degree $3$ in $\al$ in \eqref{jet3}, we have
\[
0=\Im\left[\sum_{j=2}^n (-q^1_{1,e_j}v_j+p^1_{2,e_j}v_j-q^j_{2,0}\overline{v}_j)\right]=\sum_{j=2}^n\Im[(-q^1_{1,e_j}+p^1_{2,e_j}-\overline{q}^j_{2,0})v_j],
\]
for all $v\in \C^{n-1}$ with $\|v\|=1$, which clearly implies $-q^1_{1,e_j}+p^1_{2,e_j}-\overline{q}^j_{2,0}=0$ for all $j=2,\ldots, n$. In particular, the term of degree $3$ in $\al$ always vanishes identically.

Finally, we impose the condition that the terms of degree $4$ in $\al$ in \eqref{jet3}  are real. This means
\[
\Im \left[ -q^1_{2,0}+p^1_{3,0} \right]=0.
\]
Taking into account that we already proved that $p^1_{3,0}\leq 0$, this implies that $\Im q^1_{2,0}=0$. Note also that if the terms of degree $0,1,2,3$ in \eqref{jet3} vanish, which implies in particular that $q^1_{2,0}=0$, then the terms of order $4$ are vanishing if and only if $p^1_{3,0}=0$.

Summing up, if $\Re g_v''(1)=0$  all $v\in \mathcal L_{e_1}$ then $g_v'''(1)=0$ for all $v\in \mathcal L_{e_1}$ if and only if the terms of degree $2$ and $4$ in $\al$ in  \eqref{jet3} are identically vanishing (because those of degree $1$ and $3$ always do). This, in turn, is equivalent to  $\Re q^k_{1,e_k}= 0$,  $q_{1,e_k}^h=0$ for $2\leq k<h\leq n$  and $p^1_{3,0}=0$. And this ends the proof.
\end{proof}

\begin{proposition}\label{almost}
Let $G$ be an infinitesimal generator on $\B^n$, $C^3$ at $e_1$. Assume that $e_1$ is a BRNP with dilation $0$. Then $G$ generates a group of automorphisms if and only if the following conditions are satisfied:
\begin{enumerate}
  \item $\Re \la \frac{\de G}{\de z_k}(e_1),e_k\ra =0$, for $k=2,\ldots, n$,
  \item $\Re \la \frac{\de^2 G}{\de z_1\de z_k}(e_1),e_1\ra =0$,  for $k=1,\ldots, n$,
  \item $\la \frac{\de^2 G}{\de z_1\de z_k}(e_1),e_h\ra=0$ for $2\leq k<h\leq n$,
  \item $\Re \la \frac{\de^3 G}{\de z_1^3}(e_1),e_1\ra=0$.
\end{enumerate}
Moreover, if the previous conditions are satisfied, then $G\equiv 0$ if and only if $\la \frac{\de G}{\de z_k}(e_1),e_k\ra =0$ for $k=2,\ldots, n$ and $\la \frac{\de^2 G}{\de z_1^2}(e_1),e_1\ra =0$.
\end{proposition}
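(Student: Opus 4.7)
My plan is to combine Proposition~\ref{gruppo}---which reduces the group property of $G$ to that of every slice reduction---with the one-dimensional Berkson--Porta analysis. Recall that for a generator $g:\D\to\C$ with BRNP at $1$ and dilation $0$, the factorization $g(\zeta)=(\zeta-1)^2 p(\zeta)$ holds with $\Re p\geq 0$ on $\D$; such a $g$ generates a group of automorphisms if and only if $-g$ is also a generator, i.e.\ $\Re p\equiv 0$, which by holomorphicity forces $p\equiv ic$ for some $c\in\R$, and hence $g(\zeta)=ic(\zeta-1)^2$.

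For the necessity direction I would assume $G$ generates a group; Proposition~\ref{gruppo} then gives $g_v(\zeta)=ic_v(\zeta-1)^2$ for every $v\in\mathcal L_{e_1}$. Writing $g_v(\zeta)=a_v(\zeta-1)^2+b_v(\zeta-1)^3+o(|\zeta-1|^3)$ as in the proof of Proposition~\ref{basic}, this is equivalent to $\Re a_v=0$ and $b_v=0$ for every such $v$. Proposition~\ref{basic} then rephrases $\Re a_v\equiv 0$ on $\mathcal L_{e_1}$ as conditions (1) and the $k=1$ case of (2), and (given this) rephrases $b_v\equiv 0$ as the remaining cases of (2), as (3), and as (4).

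For the sufficiency direction I would assume (1)--(4) and invoke the forward implications of Proposition~\ref{basic} to obtain $\Re a_v=0$ and $b_v=0$ for every $v\in\mathcal L_{e_1}$. In the Berkson--Porta form $g_v(\zeta)=(\zeta-1)^2 p_v(\zeta)$, this means $p_v:\D\to\{w\in\C:\Re w\geq 0\}$ has non-tangential limit $p_v(1)=a_v\in i\R$ and angular derivative $p_v'(1)=b_v=0$. The main obstacle is a boundary rigidity step promoting these two jet conditions to $p_v\equiv a_v$. Setting $\widetilde p_v:=p_v-a_v$ and composing with the Cayley transform $C(w)=(1-w)/(1+w)$ from $\{\Re w\geq 0\}$ onto $\oD$ (which sends $0$ to $1$), I obtain $\phi:=C\circ\widetilde p_v:\D\to\oD$ with $\phi(1)=1$ and $\phi'(1)=C'(0)\widetilde p_v'(1)=0$. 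By Julia's lemma combined with Carath\'eodory's theorem on the angular derivative, the angular derivative at a boundary fixed point of a non-constant holomorphic self-map of $\D$ is strictly positive; therefore $\phi\equiv 1$, hence $\widetilde p_v\equiv 0$, so $g_v(\zeta)=ic_v(\zeta-1)^2$ generates a group of automorphisms of $\D$. Proposition~\ref{gruppo} then yields that $G$ generates a group of automorphisms of $\B^n$.

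For the last assertion, Proposition~\ref{gruppo} shows that $G\equiv 0$ is equivalent to $g_v\equiv 0$ for every $v\in\mathcal L_{e_1}$, which, given the form $g_v(\zeta)=ic_v(\zeta-1)^2$ already proved, reduces to $a_v=0$ for every $v$. Unwinding the expression $a_v=Q_2^1(v)-\la T''v,v''\ra$ together with the structural identities forced by (1)--(4) (in particular $S$ being anti-Hermitian and $q^1_{2,0}\in i\R$), the vanishing $a_v\equiv 0$ on $\mathcal L_{e_1}$ reduces to the simultaneous vanishing of $q^1_{2,0}$ and of $s_{kk}$ for $k=2,\ldots,n$, which is precisely the stated condition $\la \de^2 G/\de z_1^2(e_1),e_1\ra=0$ together with $\la \de G/\de z_k(e_1),e_k\ra=0$ for $k=2,\ldots,n$.
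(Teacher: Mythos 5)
Your proposal is correct and its skeleton coincides with the paper's: both reduce the group property to the slice reductions via Proposition~\ref{gruppo} and translate the jet conditions (1)--(4) into $\Re g_v''(1)=g_v'''(1)=0$ (equivalently $\Re a_v=b_v=0$) for all $v\in\mathcal L_{e_1}$ via Proposition~\ref{basic}, including the final assertion about $G\equiv 0$, which in both treatments comes down to the equivalence ``$g_v''(1)=0$ for all $v$ iff $q^1_{2,0}=s_{kk}=0$'' already recorded in Proposition~\ref{basic}. The one genuine difference is the one-dimensional ingredient: the paper simply cites \cite[Corollary 4]{Sh} for the equivalence ``$g_v'(1)=\Re g_v''(1)=g_v'''(1)=0$ iff $g_v$ generates a group of automorphisms of $\D$,'' whereas you prove it from scratch. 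Your argument --- write $g_v(\zeta)=(\zeta-1)^2p_v(\zeta)$ with $\Re p_v\geq 0$ by Berkson--Porta, observe that the jet conditions say $p_v$ has nontangential value $a_v\in i\R$ and first-order expansion with vanishing linear term at $1$, then push $p_v-a_v$ through the Cayley transform and invoke the strict positivity of the boundary dilation coefficient of a nonconstant self-map of $\D$ to force $p_v\equiv a_v$ --- is sound; the only points worth making explicit are (i) the case split between $p_v-a_v$ constant (handled by the maximum principle for $\Re$) and nonconstant (where the open mapping theorem is what puts $\phi$ into the open disc before Julia's lemma applies), and (ii) the justification that the Berkson--Porta point $\tau$ may be taken to be $1$ for a generator with a BRNP of dilation $0$ at $1$ (this is standard, e.g.\ \cite[Theorem 1]{CDP}, but it is asserted rather than argued). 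What your route buys is a self-contained and transparent explanation of \emph{why} exactly those three boundary jet conditions characterize group generators on $\D$; what the paper's citation buys is brevity. Either way the deduction of Proposition~\ref{almost} is the same.
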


\begin{proof}
By Proposition \ref{basic}, the hypotheses are equivalent to the fact that for all $v\in \mathcal L_{e_1}$ the slice retraction $g_v$ has the property that $g_v'(1)=\Re g_v''(1)=g_v'''(1)=0$. By \cite[Corollary 4]{Sh} this is equivalent to the fact that $g_v$ is a generator of a group of automorphisms of $\D$ for all $v\in \mathcal L_{e_1}$ (and $g_v\equiv 0$ if and only if $g''_v(1)=0$). Then the statement follows from Proposition \ref{gruppo}.
\end{proof}

\begin{lemma}\label{trickP}
Let $G:\B^n\to \C^n$ be an infinitesimal generator, $C^3$ at $e_1$ and with BRNP at $e_1$ and dilation $\beta\in \R\setminus\{0\}$.
Let $H_\beta$ be given by \eqref{hhh}  and let $\tilde{G}:=G+H_\beta$. Then $\tilde{G}$ has a BRNP at $e_1$ with dilation $0$. Moreover, denoting by $\tilde{s}_{jk}$, $\tilde{q}^j_{i_1,J}$ the elements in the expansion of $\tilde{G}$ at $e_1$, we have $\tilde{s}_{kk}=s_{kk}-\frac{\beta}{2}$, $\tilde{q}^1_{2,0}=q^1_{2,0}-\frac{\beta}{2}$, $\tilde{q}^k_{1,e_k}=q^k_{1,e_k}-\frac{\beta}{2}$, $k=2,\ldots, n$ and $\tilde{q}^h_{1,e_k}=q^h_{1,e_k}$ for $2\leq k<h\leq n$.
\end{lemma}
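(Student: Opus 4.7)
My plan is to split the proof into two parts corresponding to the two assertions of the lemma. The first part is essentially immediate, while the second is a direct Taylor expansion computation.

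For the BRNP/dilation assertion, I would simply invoke Corollary \ref{trick}. That corollary was stated precisely so that if $G$ has BRNP at $e_1$ with dilation $\delta$, then $G + H_\beta$ has BRNP at $e_1$ with dilation $\delta - \beta$. Applied with $\delta = \beta$, it gives immediately that $\tilde{G} = G + H_\beta$ has BRNP at $e_1$ with dilation $0$. No further analysis is required here.

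For the identification of coefficients, I would expand $H_\beta(z) = \frac{\beta}{2}(e_1 - z_1 z)$ around $e_1$. Writing $z = e_1 + w$, i.e., $z_1 = 1 + w_1$ and $z_k = w_k$ for $k \geq 2$, the first component reads
\[
(H_\beta)_1(e_1+w) = \tfrac{\beta}{2}\bigl(1-(1+w_1)^2\bigr) = -\beta w_1 - \tfrac{\beta}{2}w_1^2,
\]
and for $k = 2,\ldots, n$,
\[
(H_\beta)_k(e_1+w) = -\tfrac{\beta}{2}(1+w_1)w_k = -\tfrac{\beta}{2} w_k - \tfrac{\beta}{2}w_1 w_k.
\]
In particular $H_\beta$ is a polynomial of degree two, so it contributes nothing to $Q_3$ or to any higher order remainder.

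From this expansion I would then read off the correction to each coefficient of $G$. At the linear level, the $(1,1)$-entry of the matrix associated to $H_\beta$ is $-\beta$, giving $\tilde{T}_{11} = \beta - \beta = 0$ (consistent with dilation zero), while the only other change is the diagonal shift $\tilde{s}_{kk} = s_{kk} - \beta/2$ for $k=2,\ldots, n$; off-diagonal entries $s_{jk}$ with $j\neq k$ and the entries $t_k$ are unaffected. At the quadratic level, the only nonzero monomials contributed by $H_\beta$ are $-\tfrac{\beta}{2}w_1^2$ in the first slot and $-\tfrac{\beta}{2}w_1 w_k$ in slot $k$, yielding $\tilde{q}^1_{2,0}=q^1_{2,0}-\beta/2$ and $\tilde{q}^k_{1,e_k}=q^k_{1,e_k}-\beta/2$ for $k=2,\ldots,n$, with all other $q^h_{i_1,J}$ (in particular $\tilde{q}^h_{1,e_k} = q^h_{1,e_k}$ for $2\leq k < h \leq n$) unchanged. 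There is no real obstacle in this argument; the only thing to be careful about is keeping track of the index conventions in \eqref{T} and \eqref{Q2}, which is purely bookkeeping.
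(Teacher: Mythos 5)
Your proof is correct and follows essentially the same route as the paper: the dilation claim is obtained by citing Corollary \ref{trick}, and the coefficient shifts are read off from the expansion of $H_\beta$ around $e_1$ into its linear and quadratic parts, which is exactly the computation the paper performs (only more tersely).
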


\begin{proof}
By Corollary \ref{trick}, the vector field $G+H_\beta$ is an infinitesimal
generator with BRNP at $1$ and dilation $=0$. Now,
\begin{equation*}
\begin{split}
H_\beta(z)&=\left( -\beta (z_1-1),-\frac{\beta}{2}z_2,\ldots,-\frac{\beta}{2}z_n\right) \\&+ \left(-\frac{\beta}{2}(z_1-1)^2,-\frac{\beta}{2}(z_1-1)z_2,\ldots, -\frac{\beta}{2}(z_1-1)z_n \right).
\end{split}
\end{equation*}
From this the statements follow easily.
\end{proof}

\begin{proof}[Proof of Theorem \ref{rigidity}]
Let $\tilde{G}:=G+H_\beta$. Thanks to Lemma \ref{trickP}, the hypotheses on $G$ implies that $\tilde{G}$ satisfies the hypothesis of Proposition \ref{almost}, and the result follows.
\end{proof}

\section{On the quadratic expansion at BRNPs}\label{quadratic}

In \cite{Sh} it is shown that if $g:\D \to \C$ is an infinitesimal generator in $\D$ which is $C^3(1)$  with expansion $g(z)=z-1+a(z-1)^2+o(|z-1|^2)$ then the quadratic part $z\mapsto z-1+a(z-1)^2$ is always an infinitesimal generator in $\D$ which generates a semigroup of linear fractional self-maps of the unit disc.

In higher dimension the same result is false, and, even when the quadratic part is an infinitesimal generator, it might not generate  a semigroup of linear fractional maps. The underlying reason is that slice reductions at a BRNP of an infinitesimal generator do not preserve the degree of expansion at the boundary (cfr. \eqref{a_v}), so that the quadratic part of the infinitesimal generator in $\B^n$ might generate a cubic term on some slice reduction. We present the following examples.

\begin{example}
Let $F:\B^2\to \C^2$ be given by
\[
F(z_1,z_2)=-\left( z_1-1,\frac{5z_2}{4(2-z_2)}\right).
\]
We claim that $F$ is an infinitesimal generator. Indeed, for each
$z\in\de\B^2$ we have
\[
\langle -F(z),z \rangle
=|z_1|^2-\overline{z_1}+\frac{5|z_2|^2}{4(2-z_2)}=(1-\overline{z_1})+|z_2|^2\left(
\frac{5}{4(2-z_2)}-1 \right).
\]
Hence
\begin{equation}\begin{split}
\Re\langle -F(z),z \rangle &\ge 1-\sqrt{1-|z_2|^2} +|z_2|^2\Re\left(
\frac{5}{4(2-z_2)}-1 \right)\\
&\ge|z_2|^2\left(\frac1{1+\sqrt{1-|z_2|^2}} +\frac{5}{4(2+|z_2|)}-1
\right)\\&=r^2\,\frac{5-3\sqrt{1-r^2}-4r\sqrt{1-r^2}}
{4(2+r)(1+\sqrt{1-r^2})}\,,
\end{split}\end{equation}
where $r=|z_2|$.

A standard computation shows that the expression
${5-3\sqrt{1-r^2}-4r\sqrt{1-r^2}}$ is positive on the segment
$[0,1]$. Therefore, one concludes that $\Re\langle F(z),z
\rangle<0$ for all $z\in\de\B^2$. Taking into account that $F$ is holomorphic past the boundary of $\B^2$, one can apply Theorem \ref{one-gv}.(3) with $\beta=0$, and Proposition \ref{cbDW} to see that $F$ is an infinitesimal generator on $\B^2$ (see also \cite[Corollary 7.1]{RS}).

On the other hand, denote by $\tilde F$ the quadratic expansion of $F$ at $e_1$, namely,
\[
\tilde F(z)=-\left(z_1-1,\frac{5z_2}8\left(1+\frac{z_2}2\right)
\right).
\]
For this mapping
\[
\langle -\tilde F(z),z \rangle
=|z_1|^2-\overline{z_1}+\frac{5|z_2|^2}8\left(1+\frac{z_2}2\right).
\]
In particular, at the point $z_1=\frac1{\sqrt 2},\
z_2=-\frac1{\sqrt 2}$ we have
\[
\langle \tilde F(z),z
\rangle=-\frac{13}{16}+\frac{37}{32\sqrt{2}}\approx 0.005>0.
\]
So, $\tilde F$ is not a semigroup generator on the ball $\B^2$.
\end{example}

\begin{example}
Let $F:\B^2\to \C^2$ be given by
\[
F(z_1,z_2)=-\left( z_1-1,\frac{3z_2}{(2-z_2)}\right).
\]
We claim that $F$ is an infinitesimal generator on $\B^2$. Indeed, for each
$z\in\de\B$ we have
\[
\langle -F(z),z
\rangle=|z_1|^2-\overline{z_1}+\frac{3|z_2|^2}{2-z_2}=(1-\overline{z_1})+|z_2|^2\left(
\frac{3}{2-z_2}-1 \right).
\]
Since the inequality $\Re\frac{1}{2-z_2}>\frac13$ holds for
all $z_2,\ |z_2|<1$, we conclude that $\Re-\langle F(z),z
\rangle>0$ for all $z\in\de\B$. As in the previous example  taking into account that $F$ is holomorphic past the boundary of $\B^2$, one can apply Theorem \ref{one-gv}.(3) with $\beta=0$, and Proposition \ref{cbDW} to see that $F$ is an infinitesimal generator on $\B^2$ (see also \cite[Corollary 7.1]{RS}).

On the other hand, denote by $\tilde F$ the the quadratic expansion of $F$ at $e_1$, namely,
\[
\tilde F(z)=-\left(z_1-1,\frac{3z_2}2\left(1+\frac{z_2}2\right)
\right).
\]
One can easily see that $\tilde F$ generates a semigroup of holomorphic self-maps of $\B^2$ which does not
consist of linear fractional maps (cfr \cite{BCDl}).
\end{example}


\begin{thebibliography}{Co-MAc}
\bibitem{Abate} M. Abate, {\sl Iteration theory of holomorphic maps on taut manifolds}, Mediterranean Press, Rende, 1989.
\bibitem{Ab-T} M. Abate, R. Tauraso, {\sl The Julia-Wolff-Caratheodory theorem(s)}. Contemp. Math. 222 (1999), 161-172.
\bibitem{Abanew} M. Abate, {\sl Angular derivatives in several complex variables}. Real methods in complex and CR geometry. Lectures given at the C.I.M.E. Summer School held in Martina Franca, June 30-July 6, 2002. Edited by D. Zaitsev and G. Zampieri. Lecture Notes in Mathematics, 1848. Springer-Verlag, Berlin, (2004), 1--47.
\bibitem{ARS} D. Aharonov, S. Reich, D. Shoikhet, {\sl  Flow invariance conditions for holomorphic mappings in Banach spaces}. Math. Proc. R. Ir. Acad. 99A (1999), no. 1, 93-104.
\bibitem{BP} E. Berkson, H. Porta, {\sl Semigroups of analytic functions and composition operators}. Michigan Math. J. 25 (1978), no. 1, 101-115.
\bibitem{BCDl} F. Bracci, M. D. Contreras, S. D\'iaz-Madrigal {\sl Infinitesimal generators associated with
semigroups of linear fractional maps}. J. Anal. Math., 102, (2007) 119-142.
\bibitem{BCD} F. Bracci, M. D. Contreras, S. D\'iaz-Madrigal, {\sl Pluripotential theory, semigroups and boundary behavior of infinitesimal generators in strongly convex domains}. J. Eur. Math. Soc. 12 (2010), 23-53.
\bibitem{BPT} F. Bracci, G. Patrizio, S. Trapani, {\sl The pluricomplex Poisson kernel for strongly convex
domains}. Trans. Amer. Math. Soc., 361, 2,  (2009), 979-1005.
\bibitem{BZ} F. Bracci, D. Zaitsev, {\sl Boundary jets of holomorphic maps between strongly pseudoconvex
domains}. J. Funct. Anal., 254, (2008), 1449-1466.
\bibitem{BK} D. M. Burns, S. G. Krantz, {\sl Rigidity of holomorphic mappings and a new Schwarz lemma at the boundary}. J. Amer. Math. Soc. 7 (1994), no. 3, 661-676.
\bibitem{Ca} T. Casavecchia, {\sl A rigidity condition for generators in strongly convex domain}. Complex Var. Elliptic Equ. 55 (2010), no. 12, 1131-1142.
\bibitem{CDP} M. D. Contreras, S. D\'iaz-Madrigal, Ch. Pommerenke, {\sl On boundary critical points for semigroups of analytic functions}. Math. Scand. 98 (2006), no. 1, 125-142.
\bibitem{Co-Ma} C. C. Cowen, B. MacCluer, {\sl Composition operators on spaces of analytic functions}, Studies in Advanced Mathematics. CRC Press, Boca Raton, FL, 1995.
\bibitem{ELRS} M. Elin, M. Levenshtein, S. Reich, D. Shoikhet, {\sl A rigidity theorem for holomorphic generators on the Hilbert ball}. Proc. Amer. Math. Soc. 136 (2008), no. 12, 4313-4320.
\bibitem{ERS} M. Elin, S. Reich, D. Shoikhet, {\sl A Julia-Carath\'eodory theorem for hyperbolically monotone mappings in the Hilbert ball}. Israel J. Math. 164 (2008), 397-411.
\bibitem{RS} S. Reich, D. Shoikhet, David {\sl Nonlinear semigroups, fixed points, and geometry of domains in Banach spaces}. Imperial College Press, London, 2005.
\bibitem{Ru} W. Rudin, {\sl Function theory in the unit ball of $\C^n$}. Grundlehren der Mathematischen Wissenschaften [Fundamental Principles of Mathematical Science], 241. Springer-Verlag, New York-Berlin, 1980
\bibitem{Shb} D. Shoikhet, {\sl Semigroups in geometrical function theory}. Kluwer Academic Publishers, Dordrecht, 2001.
\bibitem{Sh} D. Shoikhet, {\sl Another look at the Burns-Krantz theorem}. J. Anal. Math. 105 (2008), 19-42.
\end{thebibliography}
\end{document}